\DeclarePairedDelimiter{\abs}{\lvert}{\rvert}
\newcommand{\C}{\mathbb{C}}
\newcommand{\calE}{\mathcal{E}}
\newcommand{\calT}{\mathcal{T}}
\newcommand{\E}{\mathbb{E}}
\renewcommand{\P}{\mathbb{P}}
\newcommand{\R}{\mathbb{R}}
\newcommand{\V}{\mathbb{V}}
\newtheorem{proposition}{Proposition}[section]
\newtheorem{lemma}[proposition]{Lemma}
\newtheorem{corollary}[proposition]{Corollary}
\newtheorem{theorem}[proposition]{Theorem}
\newtheorem{remark}{Remark}
\newcommand{\TODO}[1]%
{\par\fbox{\begin{minipage}{0.9\linewidth}\textbf{TODO:} #1\end{minipage}}\par}
\renewcommand{\MR}[1]{}
\title{The height of multiple edge plane trees}
\author[C.~Heuberger]{Clemens Heuberger}
\address[Clemens Heuberger]{Institut f\"ur Mathematik, Alpen-Adria-Uni\-ver\-si\-t\"at Klagenfurt,
  Universit\"atsstra\ss e 65--67, 9020 Klagenfurt, Austria}
\email{clemens.heuberger@aau.at}
\thanks{C.~Heuberger is supported by the Austrian Science Fund (FWF):
  P~24644-N26. This paper has been written while he was a visitor at Stellenbosch University.}
\author[H.~Prodinger]{Helmut Prodinger}
\thanks{H.~Prodinger is supported by an incentive grant of the National Research Foundation of South Africa.}
\author[S.~Wagner]{Stephan Wagner}
\address[Helmut Prodinger, Stephan Wagner]{Department of Mathematical Sciences, Stellenbosch University, 7602 Stellenbosch,
 South Africa}
\email{hproding@sun.ac.za}
\email{swagner@sun.ac.za}
\thanks{S.~Wagner is supported by the National Research Foundation of South Africa, grant number 70560.}
\begin{document}

\begin{abstract}
Multi-edge trees as introduced in a recent paper of Dziemia{\'n}czuk are plane
trees where multiple edges are allowed. We first show that $d$-ary multi-edge
trees where the out-degrees are bounded by $d$ are in bijection with classical
$d$-ary trees. This allows us to analyse parameters such as the height.

The main part of this paper is concerned with multi-edge trees counted by their
number of edges. The distribution of the number of vertices as well as the
height are analysed asymptotically.
\end{abstract}
\subjclass[2010]{05A16; 05A15, 05C05, 60C05}
\keywords{Multi-edge tree; plane tree; $d$-ary tree; height; limit distributions}
\maketitle

\section{Introduction}

Dziemia{\'n}czuk~\cite{Dziemianczuk:2014:enumer-raney} has introduced  a tree model based on plane (=planar) trees~\cite[p.~31]{Flajolet-Sedgewick:ta:analy}, 
which are enumerated by Catalan numbers. Instead of connecting two vertices by
one edge, in his multi-edge model, two vertices can be connected by several
edges. If one counts trees by vertices, one must somehow restrict the number of
edges in order to avoid an infinity of objects with the same number of
vertices. In \cite{Dziemianczuk:2014:enumer-raney}, the chosen restriction is
that each vertex has out-degree at most $d$, i.e., there are at most $d$ edges going out
from any vertex. 
However, if one counts trees with a given number of edges, the restriction
 with the parameter $d$ is no longer necessary. This is in contrast to the case of
 classical plane trees where the number of edges equals the number of vertices
 minus one.

In \cite{Dziemianczuk:2014:enumer-raney}, several parameters of multi-edge
trees were analysed, but some questions about the (average) height (i.e., the maximum distance from the root) of such multi-edge trees were left open. The present paper aims to close this gap.

In Section~\ref{section:bijection}, a bijection is constructed which links $d$-ary  multiple edge
trees with standard $d$-ary trees. Since the bijection is
height-preserving, and the height of $d$-ary trees is well understood, we can
resort to results by Flajolet and
Odlyzko~\cite{Flajolet-Odlyzko:1982} as well as by Flajolet, Gao, Odlyzko and
Richmond\cite{Flajolet-Gao-Odlyzko-Richmond:1993} and provide in this way a full analysis of
the height of $d$-ary multi-edge trees, cf. Theorem~\ref{theorem:local-limit-theorem-d-ary-trees}.

In Section~\ref{section:by-edges}, we count trees by the number of edges and drop the parameter
$d$. The analysis of the height of plane trees appears in a classic paper by de
Bruijn, Knuth and Rice~\cite{Bruijn-Knuth-Rice:1972} (see also \cite{Prodinger:1983:height-planted}), with an average height of
asymptotically $\sqrt{\pi n}$. Now, we can follow this approach to some extent, but
combine it with a technique presented in
\cite{Flajolet-Prodinger:1986:regis}. The expected height is asymptotically
equal to $\frac{2}{\sqrt{5}}\sqrt{\pi n}$, with a more precise result in
Theorem~\ref{theorem:expected-height}. The constant is smaller, which is also
intuitive, since the multiple edges contribute to the size of the objects, but
not to the height. We also give an exact counting formula in terms of
weighted trinomial coefficients (Theorem~\ref{theorem:explicit-formula-height}) and a local limit theorem (Theorem~\ref{theorem:local-limit-theorem}).

The distribution of the number of vertices in plane multi-edge trees with $n$ edges is analysed
in Theorem~\ref{theorem:number-vertices}. The number of trees
with given number of vertices and edges is given in
Theorem~\ref{theorem:fixed-vertices}.

\section{A bijection between \texorpdfstring{$d$}{d}-ary multi-edge trees and ordinary \texorpdfstring{$d$}{d}-ary trees}\label{section:bijection}

As explained in the introduction, Dziemia{\'n}czuk \cite{Dziemianczuk:2014:enumer-raney} studies $d$-ary
multi-edge trees, where a vertex can have at most $d$ edges going out from
it. We present a simple bijection to ordinary (pruned) $d$-ary trees, where
every vertex has $d$ possible positions for an edge to be attached (e.g., left,
middle, right in the case $d = 3$). See
\cite[Example~I.14]{Flajolet-Sedgewick:ta:analy} for a discussion of pruned
$d$-ary trees. This bijection preserves (amongst other parameters, such as the number of leaves) the height, allowing us to reduce the problem of enumerating $d$-ary multi-edge trees by height to the analogous question for $d$-ary trees, which has been settled in \cite{Flajolet-Gao-Odlyzko-Richmond:1993}.

\medskip

Our bijection can be described as follows: suppose that a vertex $v$ of a
$d$-ary multi-edge tree has $r$ children, which are connected to $v$ by $k_1$,
$k_2$, $\ldots$, $k_r$ edges respectively. The corresponding vertex $v'$ in the $d$-ary tree also has $r$ children (corresponding to the children of $v$ in the natural way), which are attached to $v'$ by edges in the $k_1$-th, $(k_1+k_2)$-th, $(k_1+k_2+k_3)$-th,  \ldots, $(k_1+k_2+\cdots+k_r)$-th position. Since we are assuming that $k_1+k_2+\cdots+k_r$ is always $\leq d$, this is possible, and clearly this process is bijective for each vertex, so it also describes a bijection between trees. Figures~\ref{fig:5multi} and~\ref{fig:5ary} illustrate an example in the case $d=5$.

\begin{figure}[htbp]
\begin{center}
\begin{tikzpicture}
\node[fill=black,circle,inner sep=1.5pt] (v1) at (0,0) {};
\node[fill=black,circle,inner sep=1.5pt] (v2) at (-2,-1) {};
\node[fill=black,circle,inner sep=1.5pt] (v3) at (2,-1) {};
\node[fill=black,circle,inner sep=1.5pt] (v4) at (-3,-2) {};
\node[fill=black,circle,inner sep=1.5pt] (v5) at (-1,-2) {};
\node[fill=black,circle,inner sep=1.5pt] (v6) at (1,-2) {};
\node[fill=black,circle,inner sep=1.5pt] (v7) at (2,-2) {};
\node[fill=black,circle,inner sep=1.5pt] (v8) at (3,-2) {};
\node[fill=black,circle,inner sep=1.5pt] (v9) at (-3.5,-3) {};
\node[fill=black,circle,inner sep=1.5pt] (v10) at (-2.5,-3) {};
\node[fill=black,circle,inner sep=1.5pt] (v11) at (-1,-3) {};
\node[fill=black,circle,inner sep=1.5pt] (v12) at (2,-3) {};

\draw (v1)--(v2);
\draw (v1)--(v3);
\draw (v1) edge [bend left] (v3);
\draw (v1) edge [bend right] (v3);
\draw (v2)--(v4);
\draw (v2) edge [bend left] (v5);
\draw (v2) edge [bend right] (v5);
\draw (v3)--(v6);
\draw (v3)--(v7);
\draw (v3)--(v8);
\draw (v3) edge [bend left] (v8);
\draw (v3) edge [bend right] (v8);
\draw (v4) edge [bend left] (v9);
\draw (v4) edge [bend right] (v9);
\draw (v4)--(v10);
\draw (v5)--(v11);
\draw (v7) edge [bend left] (v12);
\draw (v7) edge [bend right] (v12);
\end{tikzpicture}
\end{center}
\caption{A $5$-ary multi-edge tree.}\label{fig:5multi}
\end{figure}
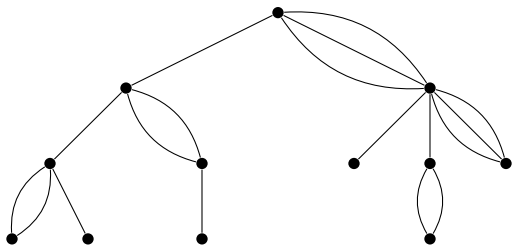

\begin{figure}[htbp]
\begin{center}
\begin{tikzpicture}
\node[fill=black,circle,inner sep=1.5pt] (v1) at (0,0) {};
\node[fill=black,circle,inner sep=1.5pt] (v2) at (-2,-1) {};
\node[fill=black,circle,inner sep=1.5pt] (v3) at (1,-1) {};
\node[fill=black,circle,inner sep=1.5pt] (v4) at (-3.5,-2) {};
\node[fill=black,circle,inner sep=1.5pt] (v5) at (-2,-2) {};
\node[fill=black,circle,inner sep=1.5pt] (v6) at (-0.5,-2) {};
\node[fill=black,circle,inner sep=1.5pt] (v7) at (0.25,-2) {};
\node[fill=black,circle,inner sep=1.5pt] (v8) at (2.5,-2) {};
\node[fill=black,circle,inner sep=1.5pt] (v9) at (-4,-3) {};
\node[fill=black,circle,inner sep=1.5pt] (v10) at (-3.5,-3) {};
\node[fill=black,circle,inner sep=1.5pt] (v11) at (-3,-3) {};
\node[fill=black,circle,inner sep=1.5pt] (v12) at (-0.25,-3) {};

\draw (v1)--(v2);
\draw[gray] (v1)--+(-0.2, -0.2);
\draw[gray] (v1)--+(0, -0.2);
\draw (v1)--(v3);
\draw[gray] (v1)--+(0.4, -0.2);
\draw (v2)--(v4);
\draw[gray] (v2)--+(-0.2, -0.2);
\draw[gray] (v2)--+(0.2, -0.2);
\draw[gray] (v2)--+(0.4, -0.2);
\draw (v2)--(v5);
\draw (v3)--(v6);
\draw (v3)--(v7);
\draw (v3)--(v8);
\draw[gray] (v3)--+(0.0, -0.2);
\draw[gray] (v3)--+(0.2, -0.2);
\draw (v4)--(v9);
\draw (v4)--(v10);
\draw[gray] (v4)--+(-0.4, -0.2);
\draw[gray] (v4)--+(0.2, -0.2);
\draw[gray] (v4)--+(0.4, -0.2);
\draw (v5)--(v11);
\draw[gray] (v5)--+(-0.1, -0.2);
\draw[gray] (v5)--+(0, -0.2);
\draw[gray] (v5)--+(0.2, -0.2);
\draw[gray] (v5)--+(0.4, -0.2);
\draw (v7)--(v12);
\draw[gray] (v7)--+(-0.4, -0.2);
\draw[gray] (v7)--+(0, -0.2);
\draw[gray] (v7)--+(0.2, -0.2);
\draw[gray] (v7)--+(0.4, -0.2);
\end{tikzpicture}
\end{center}
\caption{The associated $5$-ary tree, where each vertex can have children in five different positions (far left, left, middle, right, far right).}\label{fig:5ary}
\end{figure}
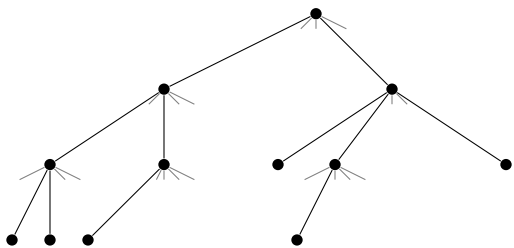

From this bijection, we immediately obtain the following corollaries:

\begin{corollary}
The number of $d$-ary multi-edge trees with $n$ vertices equals the number of $d$-ary trees with $n$ vertices, which is the Fuss-Catalan number $\frac{1}{n} \binom{nd}{n-1}$.
\end{corollary}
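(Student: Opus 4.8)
The plan is to split the statement into its two assertions and treat them separately. The first assertion — that $d$-ary multi-edge trees and $d$-ary trees with $n$ vertices are equinumerous — is an immediate consequence of the bijection constructed above. Indeed, since the bijection sends each vertex $v$ of a multi-edge tree to a corresponding vertex $v'$ of the associated $d$-ary tree (and conversely), it carries trees with $n$ vertices to trees with $n$ vertices; counting either class by vertices therefore yields identical numbers. So the only real work lies in establishing the closed form $\frac{1}{n}\binom{nd}{n-1}$ for the number of $d$-ary trees with $n$ vertices.

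For the second assertion I would use a generating function argument. Let $Y = Y(z)$ be the generating function enumerating nonempty pruned $d$-ary trees by their number of vertices, so that $[z^n]Y$ is the quantity of interest. A nonempty such tree decomposes uniquely into a root together with $d$ ordered positions (far left, \ldots, far right), each of which carries a possibly empty $d$-ary subtree. Translating this combinatorial specification into generating functions gives the functional equation
\[
  Y = z\,(1+Y)^d.
\]

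The final step is to extract $[z^n]Y$ from this equation. Since it has the form $Y = z\,\phi(Y)$ with $\phi(y) = (1+y)^d$ and $\phi(0) = 1 \neq 0$, Lagrange inversion applies and yields
\[
  [z^n]Y = \frac{1}{n}[y^{n-1}]\phi(y)^n = \frac{1}{n}[y^{n-1}](1+y)^{nd} = \frac{1}{n}\binom{nd}{n-1},
\]
which is exactly the claimed Fuss-Catalan number. I expect no genuine obstacle here: the bijection does all the combinatorial work, and the remaining difficulty is merely the bookkeeping of correctly setting up the functional equation — keeping track of vertices versus empty positions — and recognizing the resulting coefficient as a Fuss-Catalan number.
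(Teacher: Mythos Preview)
Your proposal is correct. The paper handles this corollary in exactly the same spirit: the equinumerosity follows from the bijection just constructed, and for the Fuss--Catalan count the paper simply cites \cite[Example~I.14]{Flajolet-Sedgewick:ta:analy}, which is precisely the generating-function-plus-Lagrange-inversion argument you have written out in full.
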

This is for instance shown in \cite[Example~I.14]{Flajolet-Sedgewick:ta:analy}.

\begin{corollary}
The number of $d$-ary multi-edge trees of height $h$ with $n$ vertices equals the number of $d$-ary trees of height $h$ with $n$ vertices.
\end{corollary}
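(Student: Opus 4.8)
The plan is to read the statement off directly from the bijection constructed above, the only additional observation being that this bijection does not merely preserve the number of vertices but also the height. First I would make precise what is meant by height in the multi-edge setting: the distance from the root to a vertex is the number of steps along the unique path connecting them in the tree, where a bundle of parallel edges between two consecutive vertices counts as a \emph{single} step. Equivalently, the height is the length of the longest root-to-leaf chain in the underlying rooted tree obtained by forgetting edge multiplicities.

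Next I would observe that the bijection acts locally at each vertex and only redistributes \emph{how} each child is attached to its parent (how many edges, and in which positions), never \emph{whether} it is a child. Concretely, a vertex $v$ with children $c_1, \ldots, c_r$ in the multi-edge tree is sent to a vertex $v'$ whose children are exactly the images of $c_1, \ldots, c_r$, so the parent-child relation is identical on both sides. Hence the map from the vertices of the multi-edge tree to the vertices of the associated $d$-ary tree is an isomorphism of the underlying rooted trees once multiplicities and positions are forgotten.

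Since the distance from the root depends only on this underlying rooted tree structure, it is preserved vertex by vertex, and therefore so is its maximum over all vertices, i.e.\ the height. Combining this with the fact already used in the previous corollary, namely that the number of vertices is preserved, the bijection restricts for every pair $(h,n)$ to a bijection between $d$-ary multi-edge trees of height $h$ with $n$ vertices and $d$-ary trees of height $h$ with $n$ vertices, which gives the claimed equality of counts. There is no genuine obstacle here; the only point requiring care is the first step, namely fixing a notion of height under which parallel edges do not inflate distances, since under a naive definition counting each edge separately the statement would fail. Once height is measured via the underlying skeleton, the result is immediate from the structural properties of the bijection.
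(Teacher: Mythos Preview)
Your argument is correct and follows exactly the paper's approach: the corollary is stated there as an immediate consequence of the height-preserving bijection, without further proof. Your write-up simply makes explicit why the bijection preserves height (it preserves the parent--child relation and hence the underlying rooted tree skeleton), together with the sensible clarification that height in the multi-edge setting is measured by steps rather than by individual edges.
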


It is well known that $d$-ary trees belong to the general class of \emph{simply
  generated families of trees}, and the height of such families was studied in
great detail in a paper by Flajolet, Gao, Odlyzko and Richmond
\cite{Flajolet-Gao-Odlyzko-Richmond:1993}. They obtain the following local
limit theorem (only stated for $d$-ary trees here, i.e. setting
$\phi(y)=(1+y)^d$ and $\tau=1/(d-1)$ in the formul\ae{} given there), which refines earlier results of Flajolet and Odlyzko \cite{Flajolet-Odlyzko:1982} on the average height:
\begin{theorem}[{\cite[Theorem 1.2]{Flajolet-Gao-Odlyzko-Richmond:1993}}]\label{theorem:local-limit-theorem-d-ary-trees}
Let $N_h^{(d)}(n)$ be the number of $d$-ary trees ($d$-ary multi-edge trees) with $n$ vertices whose height is $h$ and $N^{(d)}(n)$ the total number of $d$-ary trees ($d$-ary multi-edge trees) with $n$ vertices. For any $\delta > 0$, we have the asymptotic formula
\begin{align*}
\frac{N_h^{(d)}(n)}{N^{(d)}(n)} &\sim 2c\beta^4\sqrt{\pi/n} \sum_{m \geq 1} (m\pi)^2 \left(2(\pi m\beta)^2-3 \right) e^{-(\pi m\beta)^2} \\
&= 2c\beta^{-1} n^{-1/2} \sum_{m \geq 1} m^2(2(m/\beta)^2 - 3) e^{-(m/\beta)^2},
\end{align*}
where $c = \sqrt{2(d-1)/d}$, uniformly for \[\delta^{-1} (\log n)^{-1/2} \leq \beta = 2\sqrt{n}/(ch) \leq \delta (\log n)^{1/2}.\]
\end{theorem}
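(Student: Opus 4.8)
The plan is to reduce the statement to the corresponding result for ordinary $d$-ary trees and then invoke the general local limit theorem for simply generated families. By the two corollaries above, the bijection of Section~\ref{section:bijection} is height-preserving, so $N_h^{(d)}(n)$ and $N^{(d)}(n)$ are literally the same quantities whether we count $d$-ary multi-edge trees or ordinary $d$-ary trees; it therefore suffices to verify the asymptotic for the latter. Ordinary (pruned) $d$-ary trees form a simply generated family with weight generating function $\phi(y) = (1+y)^d$, since each vertex independently selects a subset of its $d$ available positions for children; indeed, Lagrange inversion applied to $y = z\phi(y)$ recovers the Fuss--Catalan count of the first corollary. This places us squarely in the setting of Flajolet, Gao, Odlyzko and Richmond, whose Theorem~1.2 expresses the ratio $N_h(n)/N(n)$ as an explicit theta-type sum in a scaled height variable.

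All that remains is to match their general constants to the present normalization. First I would solve the characteristic equation $\phi(\tau) = \tau\phi'(\tau)$: for $\phi(y) = (1+y)^d$ this reads $1+\tau = d\tau$, hence $\tau = 1/(d-1)$, and the dominant singularity is $\rho = \tau/\phi(\tau) = (d-1)^{d-1}/d^d$. Next I would identify the fluctuation constant $c$. In this framework $c = \sqrt{2}\,\sigma$, where $\sigma^2 = \tau^2\phi''(\tau)/\phi(\tau)$ is the variance of the offspring distribution of the critical branching process with probability generating function $\phi(\tau u)/\phi(\tau)$. Using $\phi''(\tau) = d(d-1)(1+\tau)^{d-2}$ together with the values of $\tau$ and $\phi(\tau)$, a short computation gives $\sigma^2 = (d-1)/d$ and therefore $c = \sqrt{2(d-1)/d}$, exactly as claimed.

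With these constants in hand, substituting into the general formula of Flajolet, Gao, Odlyzko and Richmond produces the displayed asymptotic directly; the stated window $\delta^{-1}(\log n)^{-1/2} \le \beta \le \delta(\log n)^{1/2}$ is precisely the uniformity range of their theorem under the scaling $\beta = 2\sqrt{n}/(ch)$, and the two displayed forms are simply two ways of writing the same sum after collecting the constants. I do not expect any genuinely new analytic work here: the heavy lifting---the combined singularity analysis of the height generating functions $y_h(z)$, which satisfy the iteration $y_{h+1}(z) = z\phi(y_h(z))$, together with the transfer to coefficient asymptotics uniformly in $h$---is entirely contained in the cited paper. The only real obstacle is bookkeeping: one must check that the normalization of the scaled variable and the definition of $c$ in the source agree with the conventions used here, so that the constants are transported correctly rather than up to a spurious factor.
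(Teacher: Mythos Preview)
Your proposal is correct and matches the paper exactly: the paper does not give an independent proof of this theorem but simply quotes it from \cite{Flajolet-Gao-Odlyzko-Richmond:1993}, noting (as you do) that the height-preserving bijection transfers the result to $d$-ary multi-edge trees and that one specialises the general formul\ae{} there with $\phi(y)=(1+y)^d$ and $\tau=1/(d-1)$. Your explicit verification of $c=\sqrt{2(d-1)/d}$ is a minor elaboration of what the paper leaves implicit, but the approach is identical.
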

\begin{corollary}[{\cite[Theorem~S]{Flajolet-Odlyzko:1982}, \cite[Corollary~1.2]{Flajolet-Gao-Odlyzko-Richmond:1993}}]
The average height of $d$-ary trees with $n$ vertices (and thus also the average height of $d$-ary multi-edge trees with $n$ vertices) is asymptotically equal to \[\sqrt{2\pi d n/(d-1)}.\]
\end{corollary}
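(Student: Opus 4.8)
The plan is to extract the mean directly from the local limit theorem of Theorem~\ref{theorem:local-limit-theorem-d-ary-trees}. First I would reinterpret that statement as convergence in distribution of the \emph{scaled} height $X_n = cH_n/(2\sqrt{n})$, where $H_n$ is the height of a uniformly random $d$-ary tree with $n$ vertices. Writing $x = ch/(2\sqrt n) = 1/\beta$, consecutive admissible values of $x$ are spaced by $\Delta x = c/(2\sqrt n)$, so dividing the probability $\P(H_n = h)$ by $\Delta x$ turns the right-hand side of the theorem into the limiting density
\[
  g(x) = 4x\sum_{m\geq 1} m^2\bigl(2m^2x^2 - 3\bigr)e^{-m^2x^2}.
\]
Since $h = (2\sqrt n/c)\,x$, the expected height is $\E[H_n] \sim (2\sqrt n/c)\,\E[X]$ with $\E[X] = \int_0^\infty x\,g(x)\,dx$, and everything reduces to evaluating this one integral.

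To compute $\E[X]$ I would pass to the Mellin transform $M(s) = \int_0^\infty x^{s-1}g(x)\,dx$, so that $\E[X] = M(2)$. Interchanging sum and integral and using $\int_0^\infty x^a e^{-m^2x^2}\,dx = \tfrac12 m^{-(a+1)}\Gamma\bigl(\tfrac{a+1}{2}\bigr)$ gives
\[
  M(s) = 4\zeta(s-1)\Bigl(\Gamma\bigl(\tfrac{s+3}{2}\bigr) - \tfrac32\Gamma\bigl(\tfrac{s+1}{2}\bigr)\Bigr) = 2(s-2)\,\zeta(s-1)\,\Gamma\bigl(\tfrac{s+1}{2}\bigr),
\]
where the second equality uses $\Gamma\bigl(\tfrac{s+3}{2}\bigr) = \tfrac{s+1}{2}\Gamma\bigl(\tfrac{s+1}{2}\bigr)$. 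The point $s=2$ is exactly where the simple zero of $(s-2)$ meets the simple pole of $\zeta(s-1)$ at argument $1$; since $\zeta(s-1)\sim(s-2)^{-1}$ there, the product $(s-2)\zeta(s-1)\to 1$, and hence $\E[X] = M(2) = 2\Gamma(3/2) = \sqrt\pi$. As a consistency check, $M(1) = 2(-1)\zeta(0)\Gamma(1) = 1$ confirms that $g$ is a probability density.

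Substituting back yields $\E[H_n] \sim (2\sqrt n/c)\sqrt\pi = 2\sqrt{\pi n}/c$, and inserting $c = \sqrt{2(d-1)/d}$ gives
\[
  \E[H_n] \sim 2\sqrt{\frac{\pi n d}{2(d-1)}} = \sqrt{\frac{2\pi d n}{d-1}},
\]
as claimed; the statement for multi-edge trees then follows from the height-preserving bijection of Section~\ref{section:bijection}.

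The step I expect to be the genuine obstacle is not this formal manipulation but its justification. Theorem~\ref{theorem:local-limit-theorem-d-ary-trees} is only uniform for $\delta^{-1}(\log n)^{-1/2}\leq\beta\leq\delta(\log n)^{1/2}$, that is, in the bulk $h = \Theta(\sqrt n)$, whereas $\E[H_n] = \sum_h h\,\P(H_n = h)$ a priori receives contributions from every $h$. Upgrading convergence in distribution of $X_n$ to convergence of its \emph{first} moment therefore needs a uniform-integrability argument: one must show that the upper tail $\sum_{h\geq A\sqrt n} h\,\P(H_n = h)$ is negligible, which follows from the sub-Gaussian decay of the height distribution (the $m=1$ term already gives $g(x)\sim 8x^3e^{-x^2}$, so that $\P(H_n\geq h)$ decays like $e^{-c^2h^2/(4n)}$ for $h\gg\sqrt n$), while the range $h = o(\sqrt n)$ contributes little simply because $h$ is small there. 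These tail estimates are part of the analyses in \cite{Flajolet-Odlyzko:1982} and \cite{Flajolet-Gao-Odlyzko-Richmond:1993}, which I would invoke; they also retroactively legitimise the term-by-term Mellin computation, which at the level of the limiting density alone is only formal.
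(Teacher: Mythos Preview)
The paper does not actually prove this corollary: it is stated as a direct citation of \cite[Theorem~S]{Flajolet-Odlyzko:1982} and \cite[Corollary~1.2]{Flajolet-Gao-Odlyzko-Richmond:1993}, with the multi-edge part following immediately from the height-preserving bijection. So there is nothing to compare in the paper beyond ``see the references''.

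Your route---recovering the mean from the local limit law---is a genuine alternative, and your formal computation is correct: the density $g$, the Mellin transform $M(s)=2(s-2)\zeta(s-1)\Gamma((s+1)/2)$, the evaluation $M(2)=\sqrt{\pi}$, and the final substitution all check out. The honest gap you flag is real: Theorem~\ref{theorem:local-limit-theorem-d-ary-trees} only covers the bulk, so turning $\E[X_n]\to\E[X]$ into a theorem requires separate tail control, which is indeed available in the cited papers. It is worth noting, though, that historically (and logically in those references) the dependence runs the other way: Flajolet--Odlyzko obtained the average height \emph{first}, by singularity analysis of $\sum_{h\ge 0}(T-T_h)$ (exactly the method used in this paper's Theorem~\ref{theorem:expected-height}), and the local limit theorem came a decade later. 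Your derivation is therefore a pleasant consistency check rather than the original argument.
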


Similar results for the average height were obtained by Kemp
\cite{kemp:1980:average-height, kemp:1983:average-height} (see also \cite{Prodinger:1982:kemp-r-tuply}) for slightly different models of random plane trees, namely for trees with given root degree or number of leaves.

As it was mentioned earlier, other statistical results carry over from $d$-ary trees to $d$-ary multi-edge trees as well:

\begin{corollary}
The number of $d$-ary multi-edge trees with $n$ vertices and $k$ leaves equals the number of $d$-ary trees with $n$ vertices and $k$ leaves.
\end{corollary}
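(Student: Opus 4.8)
The plan is to deduce the statement from the bijection constructed in Section~\ref{section:bijection}, exactly as the preceding corollaries were. The key observation needed is that this bijection preserves not only the height and the number of vertices, but also the number of leaves. So the whole proof reduces to isolating the single structural feature of the construction that forces leaves to correspond to leaves.

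First I would recall that the bijection operates vertex by vertex: a vertex $v$ of the $d$-ary multi-edge tree having $r$ children, connected by $k_1, k_2, \ldots, k_r$ edges, is sent to a vertex $v'$ of the associated $d$-ary tree that also has exactly $r$ children, the children of $v'$ corresponding to those of $v$ in the natural way (they merely occupy the positions $k_1, k_1+k_2, \ldots, k_1+\cdots+k_r$). The essential point is therefore that the out-degree of every vertex is preserved: $v$ has out-degree $r$ if and only if $v'$ has out-degree $r$. Since a leaf is by definition a vertex with no children, i.e.\ a vertex of out-degree $0$, the bijection maps leaves to leaves and internal vertices to internal vertices. Consequently it carries a multi-edge tree with $n$ vertices and $k$ leaves to a $d$-ary tree with the same number of vertices and the same number of leaves, and restricts to a bijection between the two corresponding sets of trees. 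This yields the claimed equality of counts.

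I do not expect a genuine obstacle here, since once out-degree preservation is noted, the number of leaves (being determined entirely by the multiset of out-degrees) is automatically preserved; the result is a direct consequence of the bijection already established. The only thing worth spelling out explicitly is precisely this out-degree preservation, which is built into the per-vertex definition of the correspondence and was already implicitly used to match up heights and vertex numbers in the earlier corollaries. Thus the proof is essentially a one-line appeal to the bijection, with the verification of leaf preservation made explicit.
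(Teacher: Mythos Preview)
Your proposal is correct and follows exactly the paper's approach: the corollary is stated without a separate proof, as an immediate consequence of the bijection, which (as the paper already notes) preserves the number of leaves because it preserves the number of children at each vertex. Your explicit verification that out-degree is preserved vertex by vertex is precisely the observation the paper relies on.
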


More generally, the following holds:

\begin{corollary}
For every $r \in \{0,1,\ldots,d\}$, the number of $d$-ary multi-edge trees with $n$ vertices, $k$ of which have exactly $r$ children, equals the number of $d$-ary trees with $n$ vertices, $k$ of which have exactly $r$ children. Thus the average number of vertices with exactly $r$ children is the same for $d$-ary multi-edge trees and $d$-ary trees with $n$ vertices.
\end{corollary}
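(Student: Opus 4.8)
The plan is to deduce this directly from the bijection constructed in Section~\ref{section:bijection}, which I claim is not merely height-preserving but preserves the number of children of every single vertex. Recall that the bijection acts locally: a vertex $v$ of the $d$-ary multi-edge tree with $r$ children, joined to $v$ by $k_1,\ldots,k_r$ edges, is sent to a vertex $v'$ of the $d$-ary tree that again has exactly $r$ children, now placed in the positions $k_1, k_1+k_2, \ldots, k_1+\cdots+k_r$. The first step is therefore to record this structural fact: the bijection induces a natural correspondence $v \mapsto v'$ between the vertex sets of the two trees, and under this correspondence $v$ and $v'$ carry the same number of children. In other words, the out-degree sequence (as a function on vertices, not just as a multiset) is transported identically.

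The second step is to pass from the pointwise statement to the counting statement. Since the vertex correspondence $v \mapsto v'$ preserves the number of children, for any fixed $r \in \{0,1,\ldots,d\}$ a vertex $v$ satisfies ``$v$ has exactly $r$ children'' if and only if its image $v'$ does. Hence the number of vertices with exactly $r$ children is an invariant of the bijection. Restricting the bijection to trees on $n$ vertices having exactly $k$ vertices of out-degree $r$ then yields a bijection between the two corresponding families, which gives the claimed equality of counts.

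The final step is the averaging statement, which is immediate: since for each $n$ the bijection matches up multi-edge trees and $d$-ary trees while preserving the statistic ``number of vertices with exactly $r$ children,'' the two finite families carry identical distributions of this parameter, and in particular the same mean. I do not anticipate a genuine obstacle here, as the bijection was engineered to respect the local branching structure; the only point requiring care is to confirm that the correspondence is bijective vertex-by-vertex \emph{and} respects $r$ simultaneously for all vertices, which follows because the construction is applied independently at each vertex and the count $r$ of children is left untouched by the reassignment of edge positions.
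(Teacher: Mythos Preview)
Your proposal is correct and follows exactly the (implicit) argument of the paper: the corollary is listed among several that ``we immediately obtain'' from the bijection, precisely because the local rule sends a vertex with $r$ children to a vertex with $r$ children. Your write-up merely spells out what the paper leaves to the reader.
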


It is not difficult to show that the average proportion of vertices with
exactly $r$ children is asymptotically equal to $\binom{d}{r} (d-1)^{d-r}
d^{-d}$ as $n \to \infty$ (cf.\ the paragraph following
\cite[Theorem~3.13]{Drmota:2009:random} with $k=r$, $\phi_k=\binom{d}{k}$,
$\Phi(y)=(1+y)^d$, $\tau=1/(d-1)$), which tends to $1/(r!e)$ as $d \to \infty$. This generalises the observation made in~\cite{Dziemianczuk:2014:enumer-raney} in the case $r=0$ that the asymptotic average proportion of leaves tends to $1/e$ as $d \to \infty$.

\section{Trees with Given Number of Edges}\label{section:by-edges}
In this section, we consider plane rooted multi-edge trees with a given number
$n$ of edges (which we call the \emph{size} of a tree). The resulting counting
sequence $A_n$ is sequence
\href{http://oeis.org/A002212}{A002212} in \cite{OEIS:2015}, see also
\cite{Priez:2013:lattic-combin-hopf-algeb}. It starts with 1, 1, 3, 10, 36,
137, 543, 2219, 9285, 39587.

Asymptotically, the number $A_n$ of plane rooted
multi-edge trees with $n$ edges is
\begin{equation}\label{eq:number-trees}
  A_n=\frac{5^{n+1/2}}{2\sqrt{\pi n^3}}\biggl(1+O\Bigl(\frac1n\Bigr)\biggr).
\end{equation}
This will follow without further effort at the end of the proof of Theorem~\ref{theorem:expected-height}. We now analyse the height of multi-edge trees.

\subsection{Generating Functions}
In the following lemma, we introduce the fundamental transformation which will
be used throughout this section. The principal branch of the square root function
is chosen as usual, i.e., as a holomorphic function on $\C\setminus \R_{\le 0}$
such that $\sqrt{1}=1$.
\begin{lemma}
  Let $Z=\C\setminus[1/5, 1]$ and $U=\{u\in\C \mid \abs{u}<1; u\neq
  (-3+\sqrt{5})/2\}$. Let
  \begin{align*}
    \upsilon(z)&=\frac{1-3z-\sqrt{1-5z}\sqrt{1-z}}{2z}&
    \text{for }z&\in \C,\\
    \zeta(u)&=\frac{u}{u^2+3u+1}&
    \text{for }u&\in \C\setminus\Bigl\{ \frac{-3\pm \sqrt 5}{2}\Bigr\}.
  \end{align*}

  Then $\upsilon\colon Z\to U$ and $\zeta\colon U\to Z$ are bijective
  holomorphic functions which are inverses of each other.
\end{lemma}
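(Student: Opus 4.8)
The plan is to analyse the rational function $\zeta$ first, where everything is explicit, prove directly that it is a holomorphic bijection $U\to Z$, and only afterwards identify $\upsilon$ with its inverse. The decisive observation is that $\zeta$ behaves very simply on the unit circle: writing $u=e^{i\theta}$ and cancelling a factor $e^{i\theta}$ gives $\zeta(e^{i\theta})=1/(e^{i\theta}+3+e^{-i\theta})=1/(3+2\cos\theta)$, so $\zeta$ maps $\partial\{\abs{u}<1\}$ onto the removed segment $[1/5,1]$ as $\cos\theta$ runs through $[-1,1]$. Since the only zero of $u^2+3u+1$ inside the unit disk is $(-3+\sqrt5)/2$, which is excluded from $U$, the function $\zeta$ is holomorphic on $U$.

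For bijectivity I would count preimages. For $w\in\C$ with $w\ne0$ the equation $\zeta(u)=w$ is the quadratic $w u^2+(3w-1)u+w=0$, whose two roots have product $w/w=1$; hence neither root is $0$ and $\abs{u_1}\abs{u_2}=1$. If one root lay on the unit circle, the boundary computation would force $w=\zeta(u_1)\in[1/5,1]$. Consequently, for $w\in Z$ (that is, $w\notin[1/5,1]$) no root lies on the circle, so exactly one lies inside it; and for $w\in[1/5,1]$ both roots lie on the circle, so none lies inside. The first statement, together with the trivial case $w=0$ whose only preimage is $u=0$, shows that every $w\in Z$ has exactly one preimage in the open disk, necessarily different from the pole and hence in $U$; the second shows $\zeta(U)\subseteq Z$. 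Thus $\zeta\colon U\to Z$ is a bijection, and, being an injective holomorphic map, its inverse $\sigma\colon Z\to U$ is holomorphic as well.

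It remains to prove $\upsilon=\sigma$, and first one must check that $\upsilon$ is holomorphic on all of $Z$; this is the only genuinely delicate point. The singularity at $z=0$ is removable because $\sqrt{1-5z}\sqrt{1-z}=\sqrt{1-6z+5z^2}=1-3z+O(z^2)$, so the numerator of $\upsilon$ is $O(z^2)$ and $\upsilon(0)=0$. The two principal square roots individually have branch cuts on $[1/5,\infty)$ and $[1,\infty)$, so their product is a priori only holomorphic on $\C\setminus[1/5,\infty)$; the key is that on the overlap $(1,\infty)$ the two jumps cancel, since approaching $z=x>1$ from either side yields the same value $-\sqrt{(5x-1)(x-1)}$ for $\sqrt{1-5z}\sqrt{1-z}$. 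Hence the product extends continuously, and then holomorphically (Riemann's removable-singularity theorem, or Morera), across $(1,\infty)$, so $\upsilon$ is holomorphic on $Z=\C\setminus[1/5,1]$.

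Finally I would tie the two functions together algebraically and conclude by connectedness. Clearing the square root in the definition gives $\sqrt{1-5z}\sqrt{1-z}=1-3z-2z\upsilon(z)$, and squaring, using $(\sqrt{1-5z}\sqrt{1-z})^2=(1-5z)(1-z)$, simplifies for $z\ne0$ to $z\upsilon(z)^2+(3z-1)\upsilon(z)+z=0$; since the product of the roots of this quadratic is $1$, we get $\upsilon(z)\ne0$ and $\zeta(\upsilon(z))=z$. Thus $\upsilon(z)$ is a root of $\zeta(u)=z$ and so equals either $\sigma(z)$, the root inside the disk, or $1/\sigma(z)$, the root outside it; as these two differ for $z\ne0$, the sets $\{z\in Z\setminus\{0\}:\abs{\upsilon(z)}<1\}$ and $\{z\in Z\setminus\{0\}:\abs{\upsilon(z)}>1\}$ are disjoint, open, and cover the connected set $Z\setminus\{0\}$. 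The former is nonempty, containing all sufficiently small $z\ne0$ where $\upsilon$ is close to $0$, so the latter is empty; hence $\abs{\upsilon(z)}<1$ throughout and $\upsilon=\sigma$ on $Z\setminus\{0\}$, and by continuity on all of $Z$. Therefore $\upsilon=\zeta^{-1}$, which shows that $\upsilon$ and $\zeta$ are mutually inverse holomorphic bijections. The main obstacle is the branch-cut cancellation that makes $\upsilon$ well defined on $Z$; the remainder is the clean boundary/root-counting analysis of $\zeta$ together with the connectedness argument selecting the correct branch.
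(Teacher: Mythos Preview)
Your proof is correct and follows essentially the same route as the paper: analyse $\zeta$ on the unit circle, use that the two solutions of $\zeta(u)=z$ multiply to $1$ to establish bijectivity $U\to Z$, check that $\sqrt{1-5z}\sqrt{1-z}$ extends holomorphically across $(1,\infty)$ so that $\upsilon$ is holomorphic on all of $Z$, and then identify $\upsilon$ with $\zeta^{-1}$. The only cosmetic difference is in the final branch selection: the paper invokes the identity theorem on the simply connected region $\C\setminus[1/5,\infty)$ and then extends to $(1,\infty)$ by continuity, whereas you argue directly via connectedness of $Z\setminus\{0\}$; both are equally valid.
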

\begin{proof}
  We first note that $\zeta$ is well-defined and holomorphic on $U$ with
  $\zeta'(u)\neq 0$ for all $u\in U$. If $\abs{u}=1$, then
  \begin{equation*}
    \zeta(u)=\frac{1}{u+\frac1u+3}=\frac{1}{3+2\Re u}.
  \end{equation*}
  Thus the image of the unit circle under $\zeta$ is the interval $[1/5, 1]$.

  For every $z\in \C\setminus\{0\}$, $z=\zeta(u)$ is equivalent to
  \begin{equation}\label{eq:quadratic-equation-u}
    u^2+u\Bigl(3-\frac{1}{z}\Bigr)+1=0
  \end{equation}
  which has two not necessarily distinct solutions $u_1$, $u_2\in\C$ with $u_1u_2=1$. W.l.o.g.,
  $\abs{u_1}\le \abs{u_2}$. Thus either $u_1\in U$ and $\abs{u_2}>1$ or
  $\abs{u_1}=\abs{u_2}=1$. In the latter case, we have $z\in[1/5, 1]$. For
  $z=0$, $z=\zeta(u)$ is equivalent to $u=0$.  This implies that $\zeta\colon
  U\to Z$ is bijective. Furthermore, $\zeta\colon U\to Z$ has a holomorphic
  inverse $\zeta^{-1}$ defined on the simply connected region $\C\setminus[1/5,
  \infty)$.

  Solving \eqref{eq:quadratic-equation-u} explicitly yields
  \begin{equation*}
    u = \frac{1-3z\pm\sqrt{1-6z+9z^2-4z^2}}{2z}
    =\frac{1-3z\pm\sqrt{1-5z}\sqrt{1-z}}{2z}.
  \end{equation*}
  In a neighbourhood of zero, we must have $\zeta^{-1}(z)=\upsilon(z)$, because
  \begin{equation*}
    \frac{1-3z+\sqrt{1-5z}\sqrt{1-z}}{2z}
  \end{equation*}
  has a pole at $z=0$.

  It is easily seen that $\sqrt{1-5z}\sqrt{1-z}$ is a holomorphic function on
  $Z$. By the identity theorem, $\zeta^{-1}=\nu$ holds in $\C\setminus[1/5, \infty)$. By continuity of $\upsilon$ in $Z$, $\upsilon$ is also the
  inverse of $\zeta$ in $(1, \infty)$.
\end{proof}

For $h\ge 0$, consider the class $\calT_h$ of plane rooted multi-edge trees
of height at most $h$.
Denote the ordinary generating function associated to $\calT_h$ by $T_h(z)$.

\begin{lemma}The generating function $T_h(z)$ is given by
  \begin{equation}
    T_h(z) = (1-z)\frac{\alpha^{h+1}-\beta^{h+1}}{\alpha^{h+2}-\beta^{h+2}}
    =(u+1)\frac{1-u^{h+1}}{1-u^{h+2}}\label{eq:T_h-explicit}
  \end{equation}
  where
  \begin{equation}\label{eq:definition-alpha-beta}
    \begin{aligned}
      \alpha &= \frac{1-z+\sqrt{1-5z}\sqrt{1-z}}{2} = \frac{u+1}{u^2+3u+1},\\
      \beta &= \frac{1-z-\sqrt{1-5z}\sqrt{1-z}}{2} = \frac{u(u+1)}{u^2+3u+1}
    \end{aligned}\end{equation}
  for $z=\zeta(u)\in Z$.
\end{lemma}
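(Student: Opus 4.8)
The plan is to set up a recursion for $T_h(z)$ from the recursive structure of multi-edge trees, to identify it as the iteration of a Möbius transformation whose multipliers are exactly $\alpha$ and $\beta$, and then to confirm the closed form \eqref{eq:T_h-explicit} by induction on $h$.

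First I would decompose a tree in $\calT_{h+1}$ according to its root: the root carries a (possibly empty) ordered sequence of children, where each child is joined to the root by $k\ge 1$ parallel edges---contributing $z^k$, hence $\sum_{k\ge1} z^k = z/(1-z)$ after summing---and is itself the root of a tree in $\calT_h$, contributing $T_h(z)$. The sequence construction then gives
\[
  T_{h+1}(z)=\frac{1}{1-\frac{z}{1-z}T_h(z)}=\frac{1-z}{1-z-z\,T_h(z)},\qquad T_0(z)=1,
\]
the initial condition reflecting that the only tree of height $0$ is a single vertex with no edges. This map $T\mapsto (1-z)/\bigl((1-z)-zT\bigr)$ is the Möbius transformation attached to the matrix $\left(\begin{smallmatrix}0&1-z\\-z&1-z\end{smallmatrix}\right)$, whose characteristic polynomial is $\lambda^2-(1-z)\lambda+z(1-z)$. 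Its two roots are precisely $\alpha$ and $\beta$ from \eqref{eq:definition-alpha-beta}; in particular $\alpha+\beta=1-z$ and $\alpha\beta=z(1-z)$, which explains the appearance of these quantities and is exactly the information the induction needs.

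Next I would prove $T_h(z)=(1-z)(\alpha^{h+1}-\beta^{h+1})/(\alpha^{h+2}-\beta^{h+2})$ by induction. The base case $h=0$ is immediate, since $\alpha-\beta$ cancels and $\alpha^2-\beta^2=(\alpha+\beta)(\alpha-\beta)$ leaves $(1-z)/(\alpha+\beta)=1$. For the inductive step, substituting the formula for $T_h$ into the recursion and clearing the common factor $\alpha^{h+2}-\beta^{h+2}$ reduces the claim $T_{h+1}=(1-z)(\alpha^{h+2}-\beta^{h+2})/(\alpha^{h+3}-\beta^{h+3})$ to the single identity
\[
  \alpha^{h+3}-\beta^{h+3}=(1-z)\bigl(\alpha^{h+2}-\beta^{h+2}\bigr)-z(1-z)\bigl(\alpha^{h+1}-\beta^{h+1}\bigr),
\]
which is nothing but the three-term linear recurrence satisfied by $(\alpha^n-\beta^n)_{n\ge0}$ on account of $\alpha+\beta=1-z$ and $\alpha\beta=z(1-z)$. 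This collapse to the power-sum recurrence is the real content of the argument; everything around it is bookkeeping.

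Finally I would pass to the variable $u$. Using $z=\zeta(u)$ one checks directly that $1-z=(u+1)^2/(u^2+3u+1)$ and $(1-5z)(1-z)=(1-u)^2(1+u)^2/(u^2+3u+1)^2$, so that the $u$-expressions for $\alpha$ and $\beta$ in \eqref{eq:definition-alpha-beta} are valid, the branch of the square root being fixed---as in the previous lemma---by continuity from a neighbourhood of $z=0$. These give $\beta/\alpha=u$ and $(1-z)/\alpha=u+1$. Factoring $\alpha^{h+1}$ out of the numerator and $\alpha^{h+2}$ out of the denominator of the closed form then yields
\[
  T_h(z)=\frac{1-z}{\alpha}\cdot\frac{1-u^{h+1}}{1-u^{h+2}}=(u+1)\,\frac{1-u^{h+1}}{1-u^{h+2}},
\]
which is the second expression in \eqref{eq:T_h-explicit}. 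The main obstacle is less the algebra than ensuring the square-root branches are handled consistently, so that the equalities in \eqref{eq:definition-alpha-beta} hold throughout $Z$ rather than only near the origin.
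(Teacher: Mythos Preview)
Your proof is correct and follows essentially the same route as the paper: both set up the recursion $T_{h}(z)=(1-z)/\bigl((1-z)-zT_{h-1}(z)\bigr)$ from the combinatorial decomposition, both isolate the characteristic polynomial $\lambda^2-(1-z)\lambda+z(1-z)$ with roots $\alpha,\beta$, and both finish by substituting $z=\zeta(u)$. The only cosmetic difference is that the paper writes $T_h=p_h/q_h$, solves the linear recurrence for $q_h$ explicitly via its characteristic equation, and then reads off $p_h=(1-z)q_{h-1}$, whereas you verify the closed form directly by induction (your three-term identity being precisely the recurrence the paper solves); the M\"obius-matrix framing is a nice way to motivate where $\alpha,\beta$ come from but does not change the argument.
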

\begin{proof}
  The class $\calT_0$ consists of an isolated vertex. For $h>0$, $\calT_h$
  consists of a root and a sequence of branches of height at most $h-1$ such that each branch is attached
  by a positive number of edges to the root. If $\calE=\{e\}$ is the class of one
  edge, we can write $\calT_h$ symbolically as
  \begin{equation}\label{eq:symbolic-equation}
    \calT_h = \circ \times (\calE^+\calT_{h-1})^*.
  \end{equation}

  The symbolic equation \eqref{eq:symbolic-equation} translates to
  \begin{equation*}
    T_h(z)=\frac1{1-\frac{z}{1-z}T_{h-1}(z)}=\frac{1-z}{1-z-zT_{h-1}(z)}.
  \end{equation*}

  This may be seen as a continued fraction. To obtain an explicit expression for
  $T_h(z)$, we use the ansatz $T_h(z)=p_h(z)/q_h(z)$ with $p_0(z)=q_0(z)=1$ and
  \begin{align*}
    p_h(z)&=(1-z)q_{h-1}(z),\\
    q_h(z)&=(1-z)q_{h-1}(z)-zp_{h-1}(z).
  \end{align*}

  Eliminating $p_h(z)$ yields the second order recurrence
  \begin{equation*}
    q_h(z)=(1-z)q_{h-1}(z)-z(1-z)q_{h-2}(z).
  \end{equation*}
  The characteristic equation is
  \begin{equation*}
    Q^2-(1-z)Q+z(1-z)=0.
  \end{equation*}
  This quadratic equation has the roots $\alpha$ and $\beta$ defined in
  \eqref{eq:definition-alpha-beta}.
  This yields the explicit expressions
  \begin{equation*}
    q_h(z) = \frac{\alpha^{h+2}-\beta^{h+2}}{(1-z)(\alpha-\beta)},\qquad
    p_h(z) = \frac{\alpha^{h+1}-\beta^{h+1}}{\alpha-\beta},
  \end{equation*}
  which result in
  \begin{equation}\label{eq:T_h-z-explicit}
    T_h(z) = (1-z)\frac{\alpha^{h+1}-\beta^{h+1}}{\alpha^{h+2}-\beta^{h+2}}.
  \end{equation}

  Under the substitution $z=\zeta(u)$, we have
  \begin{equation*}
    1-z = \frac{(u+1)^2}{u^2+3u+1},\qquad
    \beta = \frac{u(u+1)}{u^2+3u+1},\qquad
    \alpha = \frac{u+1}{u^2+3u+1}.
  \end{equation*}
  Inserting this in \eqref{eq:T_h-z-explicit} yields~\eqref{eq:T_h-explicit}.
\end{proof}

Let $T$ be the generating function of all plane, rooted multi-edge trees.

\begin{lemma}
  For $z=\zeta(u)\in Z$,
  \begin{equation*}
    T(z)=\frac{\beta}{z}=u+1
  \end{equation*}
  and
  \begin{equation}\label{eq:T-T_h-substituted-u}
    (T-T_h)(z)=\frac{1-u^2}{u}\frac{u^{h+2}}{1-u^{h+2}}.
  \end{equation}
\end{lemma}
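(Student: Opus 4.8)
The plan is to obtain $T(z)=u+1$ first and then read off $(T-T_h)(z)$ by a one-line subtraction from the already-established formula~\eqref{eq:T_h-explicit}. For the value of $T$, the most direct route is to let $h\to\infty$ in~\eqref{eq:T_h-explicit}. Since $u\in U$ forces $\abs u<1$, we have $u^{h+1}\to0$ and $u^{h+2}\to0$, so $(u+1)\frac{1-u^{h+1}}{1-u^{h+2}}\to u+1$. Combinatorially $T=\lim_{h\to\infty}T_h$ because a tree of size $n$ has height at most $n$, so each coefficient $[z^n]T_h$ is independent of $h$ once $h\ge n$; hence the pointwise analytic limit computed above agrees with the generating function $T$ for $z$ near $0$, and then on all of $Z$ by analytic continuation.

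As an independent cross-check I would verify the equivalent form $T=\beta/z$ directly. The unrestricted trees satisfy the $h\to\infty$ version of the fixed-point relation appearing in the proof of~\eqref{eq:T_h-explicit}, namely $T=(1-z)/(1-z-zT)$, i.e.\ $zT^2-(1-z)T+(1-z)=0$. Substituting $T=\beta/z$ and clearing the denominator $z$ reduces this to $\beta^2-(1-z)\beta+z(1-z)=0$, which holds because $\beta$ is by definition a root of the characteristic equation in~\eqref{eq:definition-alpha-beta}; the correct branch is singled out by $T(0)=1$. Either way, the substitutions $z=\zeta(u)$ recorded just before~\eqref{eq:T_h-explicit} give $\beta/z=u+1$ at once.

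For the difference I would simply subtract, using $T(z)=u+1$ and~\eqref{eq:T_h-explicit}:
\[
(T-T_h)(z)=(u+1)\left(1-\frac{1-u^{h+1}}{1-u^{h+2}}\right)=(u+1)\frac{u^{h+1}(1-u)}{1-u^{h+2}}=\frac{(1-u^2)u^{h+1}}{1-u^{h+2}},
\]
and rewriting $u^{h+1}=u^{h+2}/u$ puts this in the claimed form $\frac{1-u^2}{u}\frac{u^{h+2}}{1-u^{h+2}}$. The computation is entirely routine; the only step that deserves a careful word is the justification that the coefficientwise limit $T=\lim_h T_h$ coincides with the pointwise limit of the analytic expressions, which is exactly what the size-versus-height bound in the first paragraph supplies.
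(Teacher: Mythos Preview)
Your proof is correct and follows essentially the same approach as the paper: take the limit $h\to\infty$ in~\eqref{eq:T_h-explicit} using $\abs u<1$, then subtract. Your write-up is simply more detailed than the paper's three-line version, and your cross-check via the quadratic equation for $T$ is exactly what the paper mentions in the remark immediately following the lemma.
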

\begin{proof}
  It is clear that $T$ is the limit of $T_h$ for $h\to\infty$. As $\abs{u}<1$,
  we have $T(z)=u+1$. The expression for $T-T_h$ follows.
\end{proof}

Note that $T$ could also have been determined by removing the restriction on
$h$ in the symbolic equation and solving the resulting quadratic equation for
$T$.

\begin{lemma}
  The functions $T(z)$, $T_h(z)$ and $\sum_{h\ge 0}(T-T_h)(z)$ are analytic for
  $z\in Z$.
\end{lemma}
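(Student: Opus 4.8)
The plan is to transfer everything to the $u$-variable through the bijection $\upsilon\colon Z\to U$ established in the first lemma, which is holomorphic and whose image lies in the open unit disc. Writing $u=\upsilon(z)$, we have $T(z)=u+1$, while \eqref{eq:T_h-explicit} and \eqref{eq:T-T_h-substituted-u} express $T_h$ and $T-T_h$ as rational functions of $u$. Because $\abs{u}<1$ throughout $U$, the denominator $1-u^{h+2}$ never vanishes there, so these rational expressions are holomorphic in $u$ on all of $U$; composing with the holomorphic map $\upsilon$ immediately gives that $T$ and every $T_h$ are analytic on $Z$. (For $T_h$ one could equally argue from the closed form $T_h=(1-z)(\alpha^{h+1}-\beta^{h+1})/(\alpha^{h+2}-\beta^{h+2})$, using $\beta/\alpha=u$ and $\alpha\neq 0$ to see that the denominator equals $\alpha^{h+2}(1-u^{h+2})\neq 0$.)

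Before treating the sum I would record a harmless simplification of the summand. From \eqref{eq:T-T_h-substituted-u},
\[
  (T-T_h)(z)=\frac{1-u^2}{u}\frac{u^{h+2}}{1-u^{h+2}}=(1-u^2)\frac{u^{h+1}}{1-u^{h+2}},
\]
and since $h\ge 0$ the factor $u^{h+1}$ cancels the apparent pole of $(1-u^2)/u$ at $u=0$, confirming that each term is genuinely holomorphic on $Z$, including at $z=\zeta(0)=0$.

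The real content is the analyticity of the infinite sum, which I would obtain from the Weierstrass convergence theorem by establishing local uniform convergence on $Z$. Fix a compact set $K\subset Z$. Since $\upsilon$ is continuous, $\upsilon(K)$ is a compact subset of the open disc $U$, so $r:=\max_{z\in K}\abs{\upsilon(z)}<1$. For $z\in K$ and $h\ge 0$ we then have $\abs{u^{h+1}}\le r^{h+1}$, $\abs{1-u^2}\le 2$, and $\abs{1-u^{h+2}}\ge 1-r^{h+2}\ge 1-r^2>0$, whence
\[
  \abs{(T-T_h)(z)}\le \frac{2r^{h+1}}{1-r^2}.
\]
The geometric series $\sum_{h\ge 0}r^{h+1}$ converges, so by the Weierstrass $M$-test the series $\sum_{h\ge 0}(T-T_h)$ converges uniformly on $K$. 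As each summand is holomorphic on $Z$, the Weierstrass convergence theorem yields that the sum is holomorphic on $Z$.

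The only step that invokes the earlier work is the bound $r<1$, which is precisely the assertion that $\upsilon$ maps $Z$ into the open unit disc. Thus the bijection lemma does all the heavy lifting, and no genuine obstacle remains: the argument reduces to a routine Weierstrass $M$-test once the summand is written in the form above.
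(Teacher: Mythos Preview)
Your proof is correct and follows essentially the same route as the paper: pass to the $u$-variable via the holomorphic bijection $\upsilon$, use $\abs{u}<1$ to see that the denominators $1-u^{h+2}$ never vanish, and apply the Weierstrass $M$-test on compact subsets to conclude analyticity of the infinite sum. Your treatment is in fact slightly more explicit than the paper's (you spell out the compact-set argument giving $r<1$ and address the removable singularity at $u=0$), but the substance is identical.
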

\begin{proof}
  By the explicit formula for $\beta$, it is clear that $T(z)$ is an analytic
  function on $Z$.

  For $u=\upsilon(z)$ and $z\in Z$, the function
  \begin{equation*}
    T_h(z)=(u+1)\frac{1-u^{h+1}}{1-u^{h+2}}
  \end{equation*}
  is clearly analytic.

  The sum $\sum_{h\ge 0}(T-T_h)(z)$ can be written as
  \begin{equation*}
    \sum_{h\ge 0}(T-T_h)(z)=\frac{1-u^2}{u}\sum_{h\ge
      0}\frac{u^{h+2}}{1-u^{h+2}}.
  \end{equation*}
  We can bound the sum by
  \begin{equation*}
    \abs[\Big]{\sum_{h\ge
      0}\frac{u^{h+2}}{1-u^{h+2}}}\le
  \frac1{1-\abs{u}^2}\sum_{h\ge 0}\abs{u}^{h+2}=\frac{\abs{u}^2}{(1-\abs{u}^2)(1-\abs u)}.
  \end{equation*}
  By the Weierstrass $M$-test,
  \begin{equation*}
    \sum_{h\ge 0}(T-T_h)(z)
  \end{equation*}
  converges uniformly on compact subsets of $U$ and is therefore analytic in
  $U$.

  The results for $z\in Z$ follow by the fact that $\upsilon(z)$ is analytic.
\end{proof}

\subsection{Explicit Formula for the Number of Trees of Given Height}

At this stage, we can compute the number of rooted plane multi-edge trees of
size $n$ and height $>h$ explicitly. Taking the difference for $h$ and $h-1$
results in a formula for the number of trees of height $h$.

\begin{theorem}\label{theorem:explicit-formula-height}
  Let $h\ge 0$. The number of rooted plane multi-edge trees of size $n$ and
  height $>h$ is
  \begin{multline}
    \label{eq:explicit-trees-of-large-height}
    \sum_{k\ge 0}\Biggl(\binom{n-1; 1, 3, 1}{n-(h+1)-(h+2)k}-2\binom{n-1; 1, 3,
    1}{n-(h+1)-(h+2)k-2}\\
    +\binom{n-1; 1, 3, 1}{n-(h+1)-(h+2)k-4}\Biggr)
  \end{multline}
  where
  \begin{equation*}
    \binom{n; 1,3,1}{k}=[v^k](1+3v+v^2)^n
  \end{equation*}
  denotes a weighted trinomial coefficient.
\end{theorem}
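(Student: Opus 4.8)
The plan is to extract the desired count as a coefficient from the generating function $(T-T_h)(z)$, which enumerates trees of height $>h$ by size. From~\eqref{eq:T-T_h-substituted-u} we have
\[
  (T-T_h)(z)=\frac{1-u^2}{u}\frac{u^{h+2}}{1-u^{h+2}},
\]
so the number of trees of size $n$ and height $>h$ is $[z^n](T-T_h)(z)$. The difficulty is that the right-hand side is expressed in the variable $u=\upsilon(z)$ rather than in $z$ directly, so I cannot read off the coefficient naively. The natural tool is a change of variables in a Cauchy coefficient integral, using the bijection $z=\zeta(u)=u/(u^2+3u+1)$ established earlier.

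First I would write
\[
  [z^n](T-T_h)(z)=\frac{1}{2\pi i}\oint \frac{(T-T_h)(z)}{z^{n+1}}\,dz,
\]
integrating over a small positively oriented circle around the origin, and then substitute $z=\zeta(u)$. Since $\zeta$ maps a small circle around $0$ in the $u$-plane to such a contour (because $\zeta(u)=u+O(u^2)$), and $dz=\zeta'(u)\,du$, the integral becomes a Cauchy integral in $u$. Using $z=u/(u^2+3u+1)$ one finds $z^{-(n+1)}=(u^2+3u+1)^{n+1}u^{-(n+1)}$, and a short computation of $\zeta'(u)$ will produce the remaining rational factor. The key step is to simplify the whole integrand into the form $[u^{\,\cdot}](1+3u+u^2)^{n}$ times elementary factors; this is where the weighted trinomial coefficient $\binom{n;1,3,1}{k}=[v^k](1+3v+v^2)^n$ naturally appears, since powers of $(u^2+3u+1)$ are exactly its generating object.

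Next I would expand the factor $\dfrac{u^{h+2}}{1-u^{h+2}}=\sum_{k\ge 0}u^{(h+2)(k+1)}$ as a geometric series, which is legitimate for $\abs{u}<1$ on the contour, and this produces the sum over $k\ge 0$ in~\eqref{eq:explicit-trees-of-large-height}. The prefactor $\dfrac{1-u^2}{u}=u^{-1}-u$ together with whatever arises from $\zeta'(u)$ should, after collecting powers of $u$, reduce the problem to extracting three different coefficients of $(1+3v+v^2)^{\,n-1}$, shifted in their index by $0$, $2$ and $4$ and weighted by $+1,-2,+1$. I expect these three shifts with the binomial-like weights $1,-2,1$ to come from a factor of the shape $(1-u^2)^2$ (that is, $u^{-2}-2+u^2$ after normalisation) emerging when the prefactor is combined with the Jacobian $\zeta'(u)$; verifying that this factor is exactly $(1-u^2)^2$ and that the overall power-of-$u$ bookkeeping yields the index $n-(h+1)-(h+2)k$ is the main obstacle, being a somewhat delicate but routine algebraic manipulation.

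The remaining steps are purely mechanical: match each of the three coefficient extractions to $\binom{n-1;1,3,1}{n-(h+1)-(h+2)k-2j}$ for $j=0,1,2$, check that the finiteness of the sum is automatic (the trinomial coefficients vanish once the lower index leaves the range $[0,2(n-1)]$), and assemble the three contributions with signs $+1,-2,+1$. This gives precisely~\eqref{eq:explicit-trees-of-large-height}. A sanity check at $h=0$, where the formula should count all nontrivial trees (height $>0$), would confirm the bookkeeping.
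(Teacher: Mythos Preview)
Your proposal is correct and follows essentially the same route as the paper: Cauchy's integral formula, the substitution $z=\zeta(u)$ with Jacobian $\zeta'(u)=(1-u^2)/(u^2+3u+1)^2$, which indeed combines with the prefactor $(1-u^2)/u$ to give exactly the $(1-u^2)^2$ you anticipated and the power $(u^2+3u+1)^{n-1}$, followed by expansion of $1/(1-u^{h+2})$ as a geometric series. The bookkeeping you flag as the ``main obstacle'' is a two-line calculation once $\zeta'(u)$ is written down, and your expected index $n-(h+1)-(h+2)k$ and weights $1,-2,1$ are precisely what emerge.
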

\begin{proof}
  By the definition of the generating functions, we have to compute
  $[z^n](T-T_h)(z)$. By Cauchy's formula, we have
  \begin{equation}\label{eq:cauchy-1}
    [z^n](T-T_h)(z)
    =\frac{1}{2\pi i}\oint_{\abs{z} \text{ small}}\frac{(T-T_h)(z)}{z^{n+1}}\,dz.
  \end{equation}
  For sufficiently small $|u|$, the index of $0$ with respect to $\zeta(u)$
  is $1$. Therefore, using the substitution $z=\zeta(u)$ and
  using Cauchy's formula again, we can rewrite
  \eqref{eq:cauchy-1} as
  \begin{multline*}
    [z^n](T-T_h)(z)\\
    \begin{aligned}
      &=\frac{1}{2\pi i}\oint_{\abs{u} \text{
          small}}\frac{(T-T_h)(\zeta(u))}{u^{n+1}}(u^2+3u+1)^{n-1}(1-u^2)\,du\\
      &= [u^n](T-T_h)(\zeta(u))(u^2+3u+1)^{n-1}(1-u^2)\\
      &=[u^n]\frac{(1-u^2)u^{h+1}}{1-u^{h+2}}(u^2+3u+1)^{n-1}(1-u^2).
    \end{aligned}
  \end{multline*}
  Expanding the denominator into a geometric series yields
  \begin{align*}
    [z^n](T-T_h)(z)&=[u^n]\sum_{k\ge 0}(1-u^2)^2u^{h+1+(h+2)k}(u^2+3u+1)^{n-1}\\
    &=\sum_{k\ge 0}[u^{n-(h+1+(h+2)k)}] (1-2u^2+u^4)(u^2+3u+1)^{n-1}\\
    &=\sum_{k\ge 0}\bigl([u^{n-(h+1+(h+2)k)}](u^2+3u+1)^{n-1} \\&\qquad\qquad- 2
    [u^{n-(h+1+(h+2)k)-2}](u^2+3u+1)^{n-1} \\&\qquad\qquad+ [u^{n-(h+1+(h+2)k)-4}](u^2+3u+1)^{n-1}\bigr).
  \end{align*}
  By the definition of $\binom{n;1,3,1}{k}$, this is exactly \eqref{eq:explicit-trees-of-large-height}.
\end{proof}

\begin{remark}
It would be possible to determine the asymptotic behaviour of the trinomial coefficients by means of the saddle point method (cf. \cite[Section 4.3.3]{Greene-Knuth:1990:mathem}) and to obtain asymptotics for the average height (Theorem~\ref{theorem:expected-height}) and the local limit theorem (Theorem~\ref{theorem:local-limit-theorem}) from that, but the calculations would be somewhat more involved.
\end{remark}

\subsection{Expected Height}
We now compute the expected height of a random rooted plane multi-edge tree of
size $n$.

\begin{theorem}\label{theorem:expected-height}
  Let $H_n$ be the height of a random rooted plane multi-edge tree of size
  $n$. Then
  \begin{equation}\label{eq:expected-height}
    \E(H_n)=\frac{2}{\sqrt{5}}\sqrt{\pi n}- \frac{3}{2}
    + O\Bigl(\frac{1}{\sqrt{n}}\Bigr).
  \end{equation}
\end{theorem}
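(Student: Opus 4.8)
The plan is to compute the expected height via the standard identity $\E(H_n) = \sum_{h \ge 0} \P(H_n > h)$, which translates into generating functions as
\begin{equation*}
  \E(H_n) = \frac{1}{A_n}\sum_{h \ge 0}[z^n](T - T_h)(z) = \frac{1}{A_n}[z^n]\sum_{h \ge 0}(T - T_h)(z).
\end{equation*}
Since the previous lemma established that $\sum_{h \ge 0}(T - T_h)(z)$ is analytic on $Z = \C \setminus [1/5, 1]$, I would extract the coefficient asymptotics by Cauchy's formula combined with the change of variables $z = \zeta(u)$, exactly as in the proof of Theorem~\ref{theorem:explicit-formula-height}. Writing $F(z) = \sum_{h\ge 0}(T-T_h)(z) = \frac{1-u^2}{u}\sum_{h\ge 0}\frac{u^{h+2}}{1-u^{h+2}}$, this gives
\begin{equation*}
  [z^n]F(z) = [u^n]\,F(\zeta(u))\,(u^2+3u+1)^{n-1}(1-u^2).
\end{equation*}

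The dominant singularity of $A_n = [z^n]T$ is at $z = 1/5$, corresponding under $\zeta$ to $u = (-3+\sqrt 5)/2$, i.e.\ the fixed point where $u^2+3u+1 = \sqrt 5\, u$; this is the source of the $5^n$ growth and of the asymptotics~\eqref{eq:number-trees}. The key analytic obstacle is that the inner sum $\sum_{h\ge 0}\frac{u^{h+2}}{1-u^{h+2}}$ becomes singular as $u \to 1$, so the saddle point $u \approx (-3+\sqrt5)/2$ for the trinomial factor is not where the summand's own singularities lie. I therefore expect the main difficulty to be the simultaneous treatment of the saddle point (coming from $(u^2+3u+1)^{n-1}u^{-n}$) and the accumulation of poles of $\frac{1}{1-u^{h+2}}$ near $u=1$. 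The technique of Flajolet and Prodinger~\cite{Flajolet-Prodinger:1986:regis}, cited in the introduction, is designed precisely for this interaction: one approximates $\frac{u^{h+2}}{1-u^{h+2}}$ near the relevant region and uses a Mellin-type or direct summation over $h$ of the individual saddle point contributions.

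Concretely, I would parametrise $u = e^{-t/\sqrt n}$ (or an equivalent local coordinate near the saddle) so that the factor $(u^2+3u+1)^{n-1}u^{-n}$, after normalisation by $5^n$, becomes a Gaussian-type weight $e^{-c t^2}$ with the correct constant read off from the second derivative of $\log\bigl((u^2+3u+1)/(u \cdot 5^{1/??})\bigr)$ at the saddle; the prefactor $\frac{1-u^2}{u}(1-u^2)$ and the Jacobian supply the algebraic order of growth. The sum over $h$ of $\frac{u^{h+2}}{1-u^{h+2}}$ then turns into a theta-like sum whose leading behaviour, after rescaling $h \sim \sqrt n$, produces the integral $\int_0^\infty(\cdots)$ evaluating to a constant times $\sqrt{\pi n}$. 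Matching all constants—the $\frac{2}{\sqrt 5}$ in front of $\sqrt{\pi n}$, the correction term $-\frac32$, and the error $O(n^{-1/2})$—is the laborious bookkeeping step, and I would organise it so that the $\sqrt{\pi n}$ coefficient comes from the leading saddle point integral, the $-\frac32$ from the subleading term in the local expansion of $(u^2+3u+1)^{n-1}$ together with the endpoint correction in the Euler–Maclaurin summation over $h$, and the remaining estimates are pushed into the uniform error.

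Finally, the asymptotic formula~\eqref{eq:number-trees} for $A_n$ itself falls out as a by-product: applying the same change of variables to $[z^n]T = [z^n](u+1)$ gives $A_n = [u^n](u+1)(u^2+3u+1)^{n-1}(1-u^2)$, and a single saddle point evaluation at $u = (-3+\sqrt5)/2$ yields $A_n = \frac{5^{n+1/2}}{2\sqrt{\pi n^3}}(1 + O(1/n))$, confirming the claim made before the statement. Dividing the coefficient asymptotics of $F$ by this $A_n$ then produces~\eqref{eq:expected-height}, with the $5^n$ factors cancelling and the net power of $n$ working out to $n^{1/2}$ as required.
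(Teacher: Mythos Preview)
There is a concrete error in your identification of the relevant point. Under $z=\zeta(u)=u/(u^2+3u+1)$, the dominant singularity $z=1/5$ corresponds to $u=1$ (solve $5u=u^2+3u+1$, i.e.\ $(u-1)^2=0$), \emph{not} to $u=(-3+\sqrt5)/2$; the latter is a zero of $u^2+3u+1$ and hence a pole of $\zeta$, which is why it is excluded from $U$. Likewise the saddle of $(u^2+3u+1)^{n}u^{-n}$ lies at $u=1$. So the saddle coincides with the singularity of the Lambert series $\sum u^{h}/(1-u^{h})$; there is no separation of the kind you describe, and the analysis must cope with this confluence rather than exploit a gap between the two. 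Your parametrisation $u=e^{-t/\sqrt n}$ is in fact centred at $u=1$, so the proposal is internally inconsistent on this point.

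The paper does not use a saddle-point argument for this theorem. It works directly in $z$ by singularity analysis. The key input is a Mellin-transform lemma giving
\[
\sum_{h\ge 1}\frac{u^h}{1-u^h}=-\frac{\log(1-u)}{1-u}+\frac{\gamma}{1-u}+\frac{\log(1-u)}{2}-\frac14-\frac{\gamma}{2}+O\bigl((1-u)\log(1-u)\bigr)
\]
as $u\to 1$ in a sector; multiplying by $(1-u^2)/u$ and re-expressing via $1-u=\sqrt5\sqrt{1-5z}+O(1-5z)$ yields
\[
\sum_{h\ge 0}(T-T_h)(z)=-\log(1-5z)-(2-2\gamma+\log 5)+\tfrac{3}{2}\sqrt5\sqrt{1-5z}+O\bigl((1-5z)\log(1-5z)\bigr),
\]
after which standard Flajolet--Odlyzko transfer gives $[z^n]\sum(T-T_h)=5^n/n-\tfrac{3\cdot 5^{n+1/2}}{4\sqrt{\pi n^3}}+O(5^n n^{-2}\log n)$. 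Dividing by $A_n$ (obtained the same way from $T(z)=2-\sqrt5\sqrt{1-5z}+\cdots$) produces the leading term $\tfrac{2}{\sqrt5}\sqrt{\pi n}$ from the $-\log(1-5z)$ contribution and the constant $-\tfrac32$ from the $\sqrt{1-5z}$ contribution, with no Euler--Maclaurin bookkeeping needed. A saddle-point approach in $u$ is indeed what the paper uses later for the local limit theorem, so your instinct is not unreasonable, but it would have to be carried out at $u=1$ and with the Lambert-series asymptotics replacing the ``separation'' picture.
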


Before proving Theorem~\ref{theorem:expected-height}, we prove a lemma
on the harmonic sum occurring in its proof.

\begin{lemma}\label{lemma:harmonic-sum-asymptotics}We have
  \begin{multline}\label{eq:harmonic-sum-asymptotics}
    \sum_{h\ge 1}\frac{u^h}{1-u^h}= -\frac{\log(1-u)}{1-u} +
    \frac{\gamma}{1-u}\\+\frac{\log(1-u)}{2}-\frac14 - \frac{\gamma}2 + O((1-u)\log(1-u))
  \end{multline}
  as $u\to 1$ with $\abs{\arg(1-u)}<\pi/3$,
  where $\gamma$ is the Euler-Mascheroni constant.
\end{lemma}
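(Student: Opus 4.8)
The plan is to pass to the variable $t=-\log u$, so that $u\to 1$ corresponds to $t\to 0$, and to recognise the resulting series as a harmonic sum amenable to the Mellin transform. Writing $u=e^{-t}$ gives
\[
  \sum_{h\ge 1}\frac{u^h}{1-u^h}=\sum_{h\ge 1}\frac{1}{e^{ht}-1}=:f(t),
\]
which is a harmonic sum $\sum_{h\ge 1}g(ht)$ with base function $g(t)=1/(e^t-1)$. Since $g$ has Mellin transform $\Gamma(s)\zeta(s)$ in the strip $\Re s>1$, and the separating Dirichlet series is $\sum_{h\ge 1}h^{-s}=\zeta(s)$, the harmonic-sum rule yields
\[
  f^*(s)=\zeta(s)\,\Gamma(s)\zeta(s)=\Gamma(s)\zeta(s)^2,
\]
valid for $\Re s>1$.

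First I would apply the converse mapping theorem for Mellin transforms (see \cite{Flajolet-Sedgewick:ta:analy}), reading off the small-$t$ expansion of $f(t)$ from the poles of $f^*(s)t^{-s}$ to the left of $\Re s=1$. The factor $\zeta(s)^2$ produces a double pole at $s=1$; expanding $\Gamma(1+\varepsilon)=1-\gamma\varepsilon+O(\varepsilon^2)$, $\zeta(1+\varepsilon)^2=\varepsilon^{-2}+2\gamma\varepsilon^{-1}+O(1)$ and $t^{-1-\varepsilon}=t^{-1}(1-\varepsilon\log t+\cdots)$ shows the residue there is $\tfrac1t(\gamma-\log t)$. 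The simple pole of $\Gamma$ at $s=0$ contributes $\zeta(0)^2=\tfrac14$, and the next pole at $s=-1$ (those at $s=-2,-4,\ldots$ being cancelled by the trivial zeros of $\zeta$) contributes a term of order $t$. Collecting these gives
\[
  f(t)=-\frac{\log t}{t}+\frac{\gamma}{t}+\frac14+O(t)\qquad(t\to 0).
\]

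Next I would translate this back into $w=1-u$. From $t=-\log(1-w)=w+\tfrac{w^2}{2}+\cdots$ one obtains $\tfrac1t=\tfrac1w-\tfrac12+O(w)$ and $\log t=\log w+\tfrac{w}{2}+O(w^2)$. Substituting into the three leading terms of $f(t)$ and collecting powers of $w$ and $\log w$ reproduces exactly
\[
  -\frac{\log(1-u)}{1-u}+\frac{\gamma}{1-u}+\frac{\log(1-u)}{2}-\frac14-\frac\gamma2,
\]
the coefficient $\tfrac12$ of $\log w$ arising solely from the re-expansion of $-\log t/t$, and the constant $-\tfrac14-\tfrac\gamma2$ from combining the constants produced by all three terms. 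The neglected contributions are of order $w\log w$: both the $s=-1$ residue (an $O(w)$ term) and the leading genuinely new term generated by re-expanding $-\log t/t$ (a multiple of $w\log w$) are absorbed into the stated error $O((1-u)\log(1-u))$.

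The main obstacle is not the residue bookkeeping but the justification of uniformity in the sector $\abs{\arg(1-u)}<\pi/3$: the converse mapping theorem must be invoked in its sectorial form, valid for $\abs{\arg t}\le\theta$ with any fixed $\theta<\pi/2$, which requires that $f(t)$ remain holomorphic and that the inverse Mellin integral along a shifted vertical line be $O(t^{\sigma})$ uniformly in such a sector; the latter follows from the polynomial growth of $\zeta$ and the exponential decay of $\Gamma$ in vertical strips. Since $t=w+O(w^2)$ we have $\arg t\to\arg w$ as $w\to 0$, so $\abs{\arg w}<\pi/3$ forces $\abs{\arg t}<\pi/2$, which in turn guarantees $\abs{u}<1$ and hence convergence of the original series; thus the sectorial hypothesis is precisely what keeps the whole analysis inside the half-plane $\Re t>0$ where $f$ is defined and holomorphic.
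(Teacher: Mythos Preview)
Your proposal is correct and follows essentially the same route as the paper: both substitute $u=e^{-t}$, recognise the resulting series as a harmonic sum with Mellin transform $\Gamma(s)\zeta(s)^2$, extract the asymptotics in $t$ from the poles at $s=1$ and $s=0$, and then re-expand in $1-u$; the paper simply quotes the $t$-expansion from \cite{Flajolet-Gourdon-Dumas:1995:mellin} (via the divisor-sum form $\sum_{m}d(m)e^{-mt}$) rather than computing the residues explicitly as you do, and justifies the sectorial extension in the same way using the exponential decay of $\Gamma$ on vertical lines.
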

\begin{proof}
  Using the substitution $u=e^{-t}$ yields
  \begin{equation*}
    \sum_{h\ge 1}\frac{u^h}{1-u^h}=
    \sum_{h\ge 1}\frac{e^{-ht}}{1-e^{-ht}}=
    \sum_{h\ge 1}\sum_{k\ge 1}e^{-kht}=
    \sum_{m\ge 1}d(m)e^{-mt},
  \end{equation*}
  where $d(m)$ is the number of positive divisors of $m$.

  By \cite[Example~11]{Flajolet-Gourdon-Dumas:1995:mellin}, we have
  \begin{equation*}
    \sum_{m\ge 1} d(m)e^{-mt} = \frac{1}{t}(-\log t+\gamma) + \frac{1}{4} + O(t)
  \end{equation*}
  for real $t\to 0^+$. However, the same argument can also be used for
  $\abs{\arg t}<\pi/4$ because the inverse Mellin transform
  \begin{equation*}
    e^{-t}=\frac1{2\pi i}\int_{c-i\infty}^{c+i\infty} t^{-s}\Gamma(s)\, ds
  \end{equation*}
  remains valid for complex $t$ with $\abs{\arg t}<2\pi/5$ by the identity
  theorem for analytic functions; cf. \cite{Flajolet-Prodinger:1986:regis}.

  As
  \begin{equation*}
    t=-\log u=-\log(1-(1-u))=(1-u)+\frac{(1-u)^2}{2}+O((1-u)^3),
  \end{equation*}
  substituting back yields \eqref{eq:harmonic-sum-asymptotics}.
\end{proof}

\begin{proof}[Proof of Theorem~\ref{theorem:expected-height}]
  We use the well-known identity
  \begin{align*}
    \E(H_n) &= \sum_{k=0}^\infty k\P(H_n=k)=\sum_{k>h\ge 0} \P(H_n=k)=\sum_{h\ge
      0}\P(H_n >h )\\
    &=\sum_{h\ge 0}(1-\P(H_n\le h))
    = \frac{[z^n]\sum_{h\ge 0}(T-T_h)(z)}{[z^n]T(z)}.
  \end{align*}

  We intend to compute $[z^n]\sum_{h\ge 0}(T-T_h)(z)$ via singularity
  analysis. The dominant singularity is at $z=1/5$.
  To perform singularity analysis, we need the expansion of $T-T_h$ around
  $z=1/5$, corresponding to $u=1$ under the substitution $z=\zeta(u)$.

  By \eqref{eq:T-T_h-substituted-u}, we have
  \begin{align*}
    \sum_{h\ge 0}(T-T_h)(\zeta(u)) &=
    \frac{1-u^2}{u}\sum_{h\ge 2}\frac{u^h}{1-u^h}\\
    &=-(1+u) + \frac{1-u^2}{u}\sum_{h\ge 1}\frac{u^h}{1-u^h}.
  \end{align*}
  By Lemma~\eqref{eq:harmonic-sum-asymptotics}, this is
  \begin{equation}\label{eq:expectation-expansion-u}
    \begin{aligned}
      \sum_{h\ge 0}(T-T_h)(\zeta(u))&=-2\log(1-u)-(2-2\gamma)\\
      &\qquad+\frac12(1-u)+O((1-u)^2\log(1-u)).
    \end{aligned}
  \end{equation}
  We have
  \begin{align*}
    1-u &= \sqrt{5}\sqrt{1-5z}-\frac52(1-5z)+O((1-5z)^{3/2}),\\
    \log(1-u)&= \frac12\log(1-5z)+\frac12\log 5 -
    \frac{\sqrt{5}}{2}\sqrt{1-5z}+ O((1-5z)).
  \end{align*}
  Inserting this in~\eqref{eq:expectation-expansion-u} yields
  \begin{multline*}
    \sum_{h\ge 0}(T-T_h)(z)=-\log(1-5z)
    - (2-2\gamma+\log 5)\\+
    \frac{3}2\sqrt{5}\sqrt{1-5z}
    +O((1-5z)\log(1-5z))
  \end{multline*}
  for $z\to \frac15$ and $\abs{\arg(\frac15-z)}< 3\pi/ 5$, i.e. $\abs{\arg(z-\frac15)}>2\pi/5$.
  Note that the exact bounds for the arguments are somewhat arbitrary: the
  essential property of $2\pi/5$ here is that it is less than $\pi/2$. Using
  the expansions of $1-u$ and $t$ in terms of $\sqrt{1-5z}$ and of $1-u$,
  respectively, the angles are transformed accordingly, but we have to allow
  for a small error.
  By singularity analysis \cite{Flajolet-Odlyzko:1990:singul}, this yields
  \begin{equation}\label{eq:expectation-numerator}
    \begin{aligned}
      [z^n]\sum_{h\ge 0}(T-T_h)(z) &= \frac{5^n}{n}
      +\frac{3\sqrt{5}}{2}\frac{5^n}{\Gamma(-1/2)n^{3/2}}+O\biggl(5^n\frac{\log
        n}{n^2}\biggr)\\
      &=
      \frac{5^n}{n}-\frac{3\cdot 5^{n+1/2}}{4\sqrt{\pi n^3}}+O\biggl(5^n\frac{\log n}{n^2}\biggr).
    \end{aligned}
  \end{equation}
  The number of plane rooted multi-edge trees of size $n$ is
  \begin{align*}
    A_n=[z^n]T(z)&=[z^n](u+1)=[z^n](2-(1-u))\\
    &=
    [z^n]\biggl(2-\sqrt{5}\sqrt{1-5z}+\frac52(1-5z)+O((1-5z)^{3/2})\biggr).
  \end{align*}
  Singularity analysis yields
  \begin{equation*}
    A_n=-\sqrt{5}\frac{5^nn^{-3/2}}{\Gamma(-1/2)}+O\biggl(\frac{5^n}{n^{5/2}}\biggr)
    =\frac{5^{n+1/2}}{2\sqrt{\pi n^3}}\biggl(1+O\Bigl(\frac1n\Bigr)\biggr).
  \end{equation*}
  Combining this with \eqref{eq:expectation-numerator} yields
  \eqref{eq:expected-height}.
\end{proof}

\subsection{Local Limit Theorem}
In this section, we prove a local limit theorem for the height of a plane
rooted multi-edge tree. As our generating function is very explicit, we can
give a result in a wider range than \cite{Flajolet-Gao-Odlyzko-Richmond:1993}.

\begin{theorem}\label{theorem:local-limit-theorem}Let
  $0<\varepsilon<\frac16$. Then, for
  \begin{equation}\label{eq:llt-range-h}
    \sqrt{\frac{4n\pi^2}{5\varepsilon\log n}}<h<n^{3/4-\varepsilon},
  \end{equation}
  the probability of a plane rooted
  multi-edge tree to have height $h$ is
  \begin{equation*}
    \frac{5h}{n}G\Bigl(\frac{\sqrt{5}h}{2\sqrt{n}}\Bigr)\Bigl(1+O\Bigl(\frac hn
    +\frac{h^4}{n^3}+\frac{\log n}{n^{1/2-2\varepsilon}}\Bigr)\Bigr)
  \end{equation*}
  for
  \begin{equation}\label{eq:poisson}
    \begin{aligned}
      G(\alpha)&=\sum_{m\ge 1}(2\alpha^2 m^2-3)m^2
      \exp(-\alpha^2m^2)\\
      &=\frac{\sqrt{\pi^5}}{\alpha^5}\sum_{m\ge 1}
      \Bigl(2\Bigl(\frac{\pi}{\alpha}\Bigr)^2m^2-3\Bigr)
      m^2\exp\Bigl(-\Bigl(\frac{\pi}{\alpha}\Bigr)^2 m^2\Bigr).
    \end{aligned}
\end{equation}
\end{theorem}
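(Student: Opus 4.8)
The plan is to start from the exact expression $\P(H_n=h)=[z^n](T_h-T_{h-1})(z)/A_n$ and to use the telescoping identity $T_h-T_{h-1}=(T-T_{h-1})-(T-T_h)$ together with \eqref{eq:T-T_h-substituted-u}, which gives
\[
(T_h-T_{h-1})(\zeta(u))=\frac{1-u^2}{u}\Bigl(\frac{u^{h+1}}{1-u^{h+1}}-\frac{u^{h+2}}{1-u^{h+2}}\Bigr).
\]
Exactly as in the proof of Theorem~\ref{theorem:explicit-formula-height}, I would pass to the variable $u$ via $z=\zeta(u)$ and Cauchy's formula, obtaining
\[
[z^n](T_h-T_{h-1})(z)=\frac1{2\pi i}\oint_{\abs{u}=r}\frac{(1-u^2)^2}{u}\Bigl(\frac{u^{h+1}}{1-u^{h+1}}-\frac{u^{h+2}}{1-u^{h+2}}\Bigr)\frac{(u^2+3u+1)^{n-1}}{u^{n+1}}\,du
\]
for $r$ slightly below $1$. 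The weight $(u^2+3u+1)^{n-1}/u^n$ has its saddle point at $u=1$, i.e.\ at the dominant singularity $z=1/5$, so the whole analysis localises there.

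I would then set $u=e^{-s}$ and expand around $s=0$. A direct computation gives $\log\bigl((u^2+3u+1)/5\bigr)=-s+\tfrac15 s^2+O(s^4)$, the cubic term vanishing, so that
\[
\frac{(u^2+3u+1)^{n-1}}{u^n}=5^{n-1}\exp\Bigl(s+\tfrac{n-1}{5}s^2+O(ns^4)\Bigr),
\]
a Gaussian of width $s\sim n^{-1/2}$ along the imaginary direction. For the remaining factor, $(1-u^2)^2/u=4s^2(1+O(s))$, and expanding the bracket symmetrically about $h+\tfrac32$ gives
\[
\frac1{e^{(h+1)s}-1}-\frac1{e^{(h+2)s}-1}=s\,\frac{e^{(h+3/2)s}}{(e^{(h+3/2)s}-1)^2}\bigl(1+O(s^2)\bigr).
\]
Introducing $t=(h+\tfrac32)s$ matches the two scales: the Gaussian becomes $\exp(t^2/(4\alpha^2))$ with $\alpha=\sqrt5\,h/(2\sqrt n)$, and after collecting the powers of $h$ the leading integrand is proportional to $t^3\,e^t(e^t-1)^{-2}\exp(t^2/(4\alpha^2))$ on a vertical line $\Re t=\rho$ with $0<\rho<2\pi$.

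To evaluate the leading integral $I=\frac1{2\pi i}\int_{\rho-i\infty}^{\rho+i\infty}t^3\frac{e^t}{(e^t-1)^2}e^{t^2/(4\alpha^2)}\,dt$, I would insert the series $e^t(e^t-1)^{-2}=\sum_{j\ge1}j\,e^{-jt}$ (valid for $\Re t>0$), interchange summation and integration, and complete the square in each term. Since the odd moments $\frac1{2\pi i}\int t^{2k+1}e^{t^2/(4\alpha^2)}\,dt$ vanish and the even ones are elementary, each term contributes $\frac{4\alpha^5}{\sqrt\pi}j^2(2\alpha^2 j^2-3)e^{-\alpha^2 j^2}$, so that $I=\tfrac{4\alpha^5}{\sqrt\pi}G(\alpha)$ with $G$ in its first (rapidly converging for large $\alpha$) form; the second form is its image under Poisson summation, equivalently the sum of residues at the poles $t=2\pi i m$ of $e^t(e^t-1)^{-2}$, which are precisely the roots of unity nearest to $u=1$. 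Tracking the powers of $5$, $h$ and $n$ and dividing by $A_n=\frac{5^{n+1/2}}{2\sqrt{\pi n^3}}(1+O(1/n))$ from the proof of Theorem~\ref{theorem:expected-height} then yields the claimed main term $\frac{5h}{n}G(\alpha)$.

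The main work, and the main obstacle, is the uniform error analysis across the whole range \eqref{eq:llt-range-h}, where $\alpha$ runs from $o(1)$ up to $n^{1/4-\varepsilon}$. I would first truncate the $u$-contour to a small arc around $u=1$, showing that the complementary arc is exponentially negligible; here one must keep $r$ just below $1$ so as to stay to the right of the poles of the bracket at the $(h+1)$-st and $(h+2)$-nd roots of unity, the nearest of which map under $t=(h+\tfrac32)s$ exactly to the poles $t=2\pi i m$ governing $G$. Extending the truncated integral to the full vertical line costs only a Gaussian tail, and the successive approximations made above, namely $4s^2(1+O(s))$ for the prefactor, the $O(s^2)$ relative error in the bracket, and the $O(ns^4)$ term in the exponent, must be fed back and integrated; these produce the three relative error terms $O(h/n)$, $O(h^4/n^3)$ and $O(\log n/n^{1/2-2\varepsilon})$, the upper bound $h<n^{3/4-\varepsilon}$ being exactly what makes the saddle correction $h^4/n^3$ tend to $0$. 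Finally, to phrase everything as a relative error one divides by $G(\alpha)$, which can be as small as $e^{-\pi^2/\alpha^2}$; the lower bound $h>\sqrt{4n\pi^2/(5\varepsilon\log n)}$ guarantees $e^{-\pi^2/\alpha^2}\ge n^{-\varepsilon}$ through the second form of $G$, so that the absolute errors remain small relative to the main term. Establishing these bounds uniformly, rather than the leading-order computation itself, is where the real effort lies.
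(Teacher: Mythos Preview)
Your proposal is correct and follows essentially the same saddle-point strategy as the paper: pass to the $u$-variable via Cauchy's formula, localise near $u=1$, expand the Gaussian weight $(u^2+3u+1)^{n-1}/u^n$ and the $(1-u^{h+j})^{-1}$ factors, evaluate the leading integral by the series $e^t(e^t-1)^{-2}=\sum_{j\ge1} j e^{-jt}$ (this is precisely the paper's Lemma~\ref{lemma:integral} in different coordinates), and then control the relative error using both Poisson-dual forms of $G$ at the two ends of the range for $\alpha$. The only cosmetic differences are your coordinates $s=-\log u$, $t=(h+\tfrac32)s$ versus the paper's $u=re^{i\varphi}$ with $r=e^{-5h/(2n)}$, $t=\sqrt n\,\varphi$, and your symmetrisation of the bracket about $h+\tfrac32$ versus the paper's approximation $(1-u^{h+1})/(1-u^h)=1+O(1/h+n|\varphi|/h^2)$.
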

The fact that the two expressions for $G(\alpha)$ in \eqref{eq:poisson} are
equal is Poisson's sum formula (cf.\cite[(3.12.1)]{Bruijn:1958})
for $f(x)=(2\alpha^2 x^2-3)x^2\exp(-\alpha^2 x^2)$.

We first compute the integral which will appear by application of the saddle point method.
\begin{lemma}\label{lemma:integral}
  Let $0<a<1$, $0<b$ be real numbers and $c$, $d$ be complex numbers.
  Then
  \begin{multline*}
    \int_{-\infty}^\infty
    \frac{(ct+d)^3\exp\bigl(-\frac{t^2}5\bigr)}{(1-ae^{ibt})^2}\, dt \\
    =
    \sqrt{5\pi}\sum_{m\ge 0}(m+1)\Bigl(\frac{15}2\Bigl(\frac52
    cibm+d\Bigr)c^2+\Bigl(\frac52 cibm+d\Bigr)^3\Bigr)\\\times a^m\exp\Bigl(-\frac{5}{4}b^2m^2\Bigr).
  \end{multline*}
\end{lemma}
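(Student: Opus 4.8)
The plan is to expand the singular factor $(1-ae^{ibt})^{-2}$ as a geometric series and integrate term by term, thereby reducing the whole computation to elementary Gaussian moments. Since $\abs{ae^{ibt}}=a<1$ for real $t$, I would first write, using $(1-x)^{-2}=\sum_{m\ge 0}(m+1)x^m$,
\[
\frac{1}{(1-ae^{ibt})^2}=\sum_{m\ge 0}(m+1)a^m e^{ibmt}.
\]
Substituting this into the integral and interchanging summation and integration reduces the claim to evaluating, for each $m\ge 0$,
\[
I_m=\int_{-\infty}^\infty (ct+d)^3\exp\Bigl(-\tfrac{t^2}{5}+ibmt\Bigr)\,dt.
\]
The interchange is legitimate because on the real axis $\abs{e^{ibmt}}=1$, so $\abs{I_m}\le\int_{-\infty}^\infty\abs{ct+d}^3 e^{-t^2/5}\,dt=:C<\infty$ uniformly in $m$, and $\sum_{m\ge0}(m+1)a^m C<\infty$; dominated convergence (equivalently, Tonelli applied to the nonnegative majorant) then justifies the exchange.

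Next I would evaluate $I_m$ by completing the square, noting that
\[
-\tfrac{t^2}{5}+ibmt=-\tfrac15\Bigl(t-\tfrac{5ibm}{2}\Bigr)^2-\tfrac54 b^2 m^2,
\]
which immediately produces the claimed factor $\exp(-\tfrac54 b^2m^2)$. Shifting the contour back to the real axis and setting $s=t-\tfrac{5ibm}{2}$ turns $ct+d$ into $cs+D$ with $D=\tfrac52 cibm+d$, matching the notation in the statement. Expanding $(cs+D)^3$, the odd powers of $s$ integrate to zero against the even weight $e^{-s^2/5}$, leaving
\[
I_m=\exp\Bigl(-\tfrac54 b^2m^2\Bigr)\int_{-\infty}^\infty\bigl(3c^2D\,s^2+D^3\bigr)e^{-s^2/5}\,ds.
\]
The two standard moments $\int_{-\infty}^\infty e^{-s^2/5}\,ds=\sqrt{5\pi}$ and $\int_{-\infty}^\infty s^2 e^{-s^2/5}\,ds=\tfrac52\sqrt{5\pi}$ then yield $I_m=\sqrt{5\pi}\exp(-\tfrac54 b^2m^2)\bigl(\tfrac{15}{2}c^2D+D^3\bigr)$, and resumming $\sum_{m\ge0}(m+1)a^m I_m$ gives exactly the asserted identity.

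There is no serious obstacle here; the computation is essentially a bookkeeping exercise built from the binomial expansion and the elementary Gaussian moment formulas. The only two points that deserve a word of care are the analytic justifications: the term-by-term integration is handled cleanly by performing the interchange \emph{before} any contour deformation, when the oscillatory factor has modulus one, and the horizontal contour shift for each fixed $m$ is a routine application of Cauchy's theorem using the decay of the Gaussian along the connecting vertical segments. Once these are in place, the remaining algebra is purely mechanical.
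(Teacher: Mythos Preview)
Your proof is correct and follows essentially the same approach as the paper: expand the denominator as a binomial series, interchange sum and integral, then complete the square (equivalently, substitute $t=s+\tfrac{5}{2}ibm$) and shift the contour back to the real axis to reduce to Gaussian moments. You supply slightly more explicit justification for the interchange and the contour shift than the paper does, but the argument is the same.
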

\begin{proof}
  We expand the denominator of the integrand as a binomial series, dominated by
  $(1-a)^{-2}$. Thus
  \begin{multline*}
    \int_{-\infty}^\infty
    \frac{(ct+d)^3\exp\bigl(-\frac{t^2}5\bigr)}{(1-ae^{ibt})^2}\, dt \\=
    \sum_{m\ge 0}(m+1)a^m \int_{-\infty}^\infty (ct+d)^3 \exp\Bigl(-\frac{t^2}{5}+ibmt\Bigr)\,dt.
  \end{multline*}
  Substituting $t=z+\frac{5}{2}ibm$ and shifting the path of integration back
  to the real line yields
  \begin{multline*}
    \int_{-\infty}^\infty(ct+d)^3
    \exp\Bigl(-\frac{t^2}{5}+ibmt\Bigr)\,dt\\
    = \exp\Bigl(-\frac{5}{4}b^2m^2\Bigr)\sqrt{5\pi}\Bigl(\frac{15}2\Bigl(\frac{5}{2}cibm+d\Bigr)c^2+\Bigl(\frac{5}{2}cibm+d\Bigr)^3\Bigr).
  \end{multline*}
\end{proof}

\begin{proof}
  Instead of computing the number of trees of height exactly $h$, we compute
  the number $A_{nh}$ of trees of height exactly $h-1$ because this leads to
  more convenient formul\ae{} and does not matter asymptotically. By
  \eqref{eq:T-T_h-substituted-u}, we get
  \begin{align*}
    A_{nh}:=[z^n](T_{h-1}-T_{h-2})(z)&=[z^n]((T-T_{h-2})-(T-T_{h-1}))\\
    &=[z^n]\frac{1-u^2}{u}\Bigl(\frac{u^{h}}{1-u^h}-\frac{u^{h+1}}{1-u^{h+1}}\Bigr)\\
    &=[z^n]\frac{1-u^2}{u}\frac{u^h-u^{h+1}}{(1-u^h)(1-u^{h+1})}\\
    &=[z^n]\frac{(1+u)(1-u)^2}{u}\frac{u^h}{(1-u^h)(1-u^{h+1})}
  \end{align*}
  for $z=\zeta(u)$.

  Using this transformation
  and Cauchy's formula as in the proof of
  Theorem~\ref{theorem:explicit-formula-height} yields
  \begin{align*}
    A_{nh}&=\frac{1}{2\pi i}\oint_{\abs z\text{ small}} \frac{(1+u)(1-u)^2}{u}\frac{u^h}{(1-u^h)(1-u^{h+1})}\frac{dz}{z^{n+1}}\\
    &=\frac{1}{2\pi i}\oint_{\abs{u}\text{ small}}\frac{(1+u)^2(1-u)^3(u^2+3u+1)^{n-1}}{(1-u^h)(1-u^{h+1})u^{n-h+2}}\,du.
  \end{align*}
Now we apply the saddle point method to this integral. It turns out that the right choice for the contour of integration is given by the parametrisation $u=re^{i\varphi}$ with
  $r=\exp\bigl(-\frac52\frac{h}{n}\bigr)$ and $-\pi\le\varphi\le \pi$. This
  yields
  \begin{align*}
    A_{nh}&=\frac{1}{2\pi}\int_{-\pi}^\pi\frac{(1+u)^2(1-u)^3(u^2+3u+1)^{n-1}}{(1-u^h)(1-u^{h+1})u^{n-h+1}}\,d\varphi\\
    &= \frac{1}{2\pi}\int_{-\pi}^\pi \frac{g(u)\exp(nf(u))}{(1-u^h)(1-u^{h+1})}\,d\varphi
  \end{align*}
  for
  \begin{align*}
    f(u)&=\log(1+3u+u^2)+\Bigl(\frac{h}{n}-1\Bigr)\log u,\\
    g(u)&=\frac{(1+u)^2(1-u)^3}{u(u^2+3u+1)}.
  \end{align*}
  We set 
  \begin{equation*}
    \alpha^2=\frac{5h^2}{4n}.
  \end{equation*}
  By the assumption \eqref{eq:llt-range-h}, we have
  \begin{equation}\label{eq:alpha-square-lower-bound}
    \alpha^2>\frac{\pi^2}{\varepsilon \log n}.
  \end{equation}
  Note that $r\to 1$ for $n\to\infty$. We also note that $g(u)=O(1)$ on the area of
  integration. If $\alpha^2\le\pi$,
  \[\abs{1-u^h}\ge 1-r^h= 1-\exp(-2\alpha^2)\ge
  2\alpha^2\exp(-2\alpha^2)\ge
  2\alpha^2\exp(-2\pi);\]
  thus
  \begin{equation}\label{eq:limit-theorem-denominator-bounds}
    \frac{1}{1-u^h}=O\Bigl(\frac{n}{h^2}\Bigr)=O(\log n), \qquad
    \frac{1}{1-u^{h+1}}=O\Bigl(\frac{n}{h^2}\Bigr)=O(\log n).
  \end{equation}
  Otherwise, $r^h\le \exp(-2\pi)$, i.e., $\frac{1}{1-u^h}$ and
  $\frac{1}{1-u^{h+1}}$ are bounded. Thus
  \eqref{eq:limit-theorem-denominator-bounds} can be used in any case.

  We first prune the tails. We set $\delta_n=n^{-1/2+\varepsilon}$ such that
  $n\delta_n^2=n^{2\varepsilon}$ and
  $n\delta_n^4=n^{-1+4\varepsilon}\le n^{-1/2+\varepsilon}$ and
  $n\delta_n/h^2=O(n^{-1/2+\varepsilon}\log n)$ for $n\to\infty$. In particular,
  we have $\delta_n=o(1)$.

  For $\abs{\varphi}>\delta_n$, we have
  \begin{align*}
    \abs{1+3u+u^2}&\le \abs{1+3u}+r^2=
    \sqrt{1+6r\cos\varphi+9r^2}+r^2\\
    &\le \sqrt{1+6r\cos\delta_n+9r^2}+r^2\\
    &\le\sqrt{1 + 9r^2 +6r - 6r\frac{\delta_n^2}{3}}+r^2
    =\sqrt{(1+3r)^2-2r\delta_n^2}+r^2\\
    &\le 1+3r+r^2-\frac{r}{1+3r}\frac{\delta_n^2}{2}
    \le 1+3r+r^2-\frac{\delta_n^2}{10}
  \end{align*}
  for sufficiently large $n$.
  We conclude that for $\abs{\varphi}>\delta_n$,
  \begin{equation*}
    \Re f(u)\le \log\Bigl(1+3r+r^2-\frac{\delta_n^2}{10}\Bigr)   +
    \Bigl(\frac{h}{n}-1\Bigr)\log r \le f(r)-\frac{\delta_n^2}{100}
  \end{equation*}
  for sufficiently large $n$.
  Thus, by \eqref{eq:limit-theorem-denominator-bounds},
  \begin{multline*}
    A_{nh} =
    \frac1{2\pi}\int_{-\delta_n}^{\delta_n}\frac{g(u)\exp(nf(u))}{(1-u^h)(1-u^{h+1})}\,d\varphi\\
    + O\Bigl(\log^2n\exp(nf(r))\exp\Bigl(-\frac{n\delta_n^2}{100}\Bigr)\Bigr).
  \end{multline*}
  
  We now approximate the integrand in the central region.
  We have
  \begin{align*}
    f(u)&=\log 5 + \frac{h}{n}\Bigl(-\frac{5h}{2n}+i\varphi \Bigr)
    +\frac{1}{5}\Bigl(-\frac{5h}{2n}+i\varphi \Bigr)^2 \\
    &\qquad\qquad\qquad+ O\Bigl(\Bigl(\frac{h}{n}+|\varphi|\Bigr)^4\Bigr)\\
    &=\log 5 -\frac{5h^2}{4n^2}  - \frac{\varphi^2}{5} + O\Bigl(\Bigl(\frac{h}{n}+|\varphi|\Bigr)^4\Bigr),\\
    g(u) &= \frac45\Bigl(\frac{5h}{2n} - i\varphi\Bigl)^3\Bigl(1+O\Bigl(\frac{h}{n}+|\varphi|\Bigr)\Bigr),\\
    \frac{1-u^{h+1}}{1-u^h}&=1 + \frac{u^h(1-u)}{1-u^h} = 1 +
    O\Bigl(\frac{n}{h^2}\Bigl(\frac{h}{n}+|\varphi|\Bigr)\Bigr)\\
    &=1+O\Bigl(\frac{1}{h}+\frac{n|\varphi|}{h^2}\Bigr).
  \end{align*}
  Therefore, noting that $n (h/n+\delta_n)^4=O(h^4/n^3+ n\delta_n^4)=o(1)$ yields
  \begin{multline*}
    A_{nh}=\frac{2\cdot 5^n\exp\bigl(-\frac{5h^2}{4n}\bigr)}{5\pi} \int_{-\delta_n}^{\delta_n}\frac{\bigl(\frac{5h}{2n}-i\varphi\bigr)^3}{(1-u^h)^2}\exp\Bigl(-\frac{n\varphi^2}{5}\Bigr)\\
    \times \Bigl(1+O\Bigl(\frac hn +\frac{h^4}{n^3}+\frac{\log n}{n^{1/2-\varepsilon}}\Bigr)\Bigr)\,d\varphi\\
    + O\Bigl(5^n\log^2n\exp\Bigl(-\frac{5h^2}{4n} -\frac{n\delta_n^2}{100}\Bigr)\Bigr).
  \end{multline*}
  We now use the substitution $\sqrt{n}\varphi=t$, leading to
  \begin{multline*}
    \frac{A_{nh}5\pi\sqrt{n}}{2\cdot 5^n\exp\bigl(-\frac{5h^2}{4n}\bigr)}
    = \int_{-\delta_n\sqrt{n}}^{\delta_n\sqrt{n}}\frac{\bigl(\frac{5h}{2n}-i\frac{t}{\sqrt{n}}\bigr)^3}{(1-u^h)^2}\exp\Bigl(-\frac{t^2}{5}\Bigr)\\
    \times \Bigl(1+O\Bigl(\frac hn +\frac{h^4}{n^3}+\frac{\log n}{n^{1/2-\varepsilon}}\Bigr)\Bigr)\,dt\\
    + O\Bigl(\sqrt{n}\log^2n\exp\Bigl(-\frac{n\delta_n^2}{100}\Bigr)\Bigr).
  \end{multline*}
  We set
  \begin{align*}
    I_{hn}&=\int_{-\infty}^{\infty}
    \frac{\bigl(\frac{5h}{2n}-i\frac{t}{\sqrt{n}}\bigr)^3}{(1-u^h)^2}\exp\Bigl(-\frac{t^2}{5}\Bigr)\,dt,\\
    E_{hn}&=\frac{1}{(1-r^h)^2}\int_{-\infty}^{\infty}
    \Bigl(\frac{5h}{2n}+\frac{\abs{t}}{\sqrt{n}}\Bigr)^3\exp\Bigl(-\frac{t^2}{5}\Bigr)\,dt,
  \end{align*}
  and note that the contribution of $\abs t>\delta_n\sqrt{n}$ is again negligible: we have
  \begin{multline*}
    \Bigg|\int_{\delta_n\sqrt{n}}^{\infty} \frac{\bigl(\frac{5h}{2n}-i\frac{t}{\sqrt{n}}\bigr)^3}{(1-u^h)^2}\exp\Bigl(-\frac{t^2}{5}\Bigr)\,dt \Bigg| \\\leq \frac{1}{(1-r^h)^2} \int_{\delta_n\sqrt{n}}^{\infty} \biggl(\frac{5h}{2n}+\frac{t}{\sqrt{n}}\biggr)^3 \exp \biggl( -\frac{t \delta_n \sqrt{n}}{5} \biggr)\,dt.
  \end{multline*}
  Now we can use the estimate~\eqref{eq:limit-theorem-denominator-bounds} for
  $1-r^h$ as before, and the integral in the upper bound can in principle be
  computed explicitly. It is $O((\sqrt{n}\delta_n)^{-1} \exp(-n\delta_n^2/5))$,
  so the total contribution of the tails (i.e., the regions where $\abs
  t>\delta_n\sqrt{n}$; of course the estimate for negative $t$ is analogous) is
  $O(n^{-1/2}\delta_n^{-1}\log^2n \exp(-n\delta_n^2/5))$. It would be possible to
  give an even better bound, but this is enough for our purposes.

  We obtain
  \begin{multline}\label{eq:1}
    \frac{A_{nh}5\pi\sqrt{n}}{2\cdot 5^n\exp\bigl(-\frac{5h^2}{4n}\bigr)}
    = I_{hn}+ O\Bigl(E_{hn}\Bigl(\frac hn +\frac{h^4}{n^3}+\frac{\log n}{n^{1/2-\varepsilon}}\Bigr)\Bigr)\\
    + O\Bigl(\sqrt{n}\log^2n\exp\Bigl(-\frac{n\delta_n^2}{100}\Bigr)\Bigr).
  \end{multline}
  By Lemma~\ref{lemma:integral} with $a=\exp\bigl(-(5h^2)/(2n)\bigr)$,
  $b=h/\sqrt{n}$, $c=-i/\sqrt{n}$ and $d=(5h)/(2n)$ and by replacing $m+1$ by
  $m$, we obtain
  \begin{equation}\label{eq:I_h_n_G}
    \begin{aligned}
      I_{hn}&=\frac{25h\sqrt{5\pi}\exp\bigl(\frac{5h^2}{4n}\bigr)}{4n^2}\sum_{m\ge 1}\Bigl(\frac{5 h^{2}}{2n}m^2-3\Bigr)m^2
      \exp\Bigl(-\frac{5h^2}{4n}m^2\Bigr)\\
      &=\frac{25h\sqrt{5\pi}\exp(\alpha^2)}{4n^2}G(\alpha).
    \end{aligned}
  \end{equation}
  The integral $E_{hn}$ can be bounded by
  \begin{equation}\label{eq:limit-theorem-E-bound}
    E_{hn}=O\biggl(\frac{\frac{h^3}{n^3}+\frac{1}{n^{3/2}}}{(1-r^h)^2}\biggr).
  \end{equation}

  We first consider the case $\alpha^2\ge \pi$. In this case, we have
  $E_{hn}=O(h^3/n^3)$. All summands in the first expression in
  \eqref{eq:poisson} are positive and its first summand is at least
  $\alpha^2\exp(-\alpha^2)$, so that 
  \begin{equation*}
    I_{hn}=\Omega\Bigl(\frac{h^3}{n^3}\Bigr)=\Omega(E_{hn}).
  \end{equation*}
  Then \eqref{eq:1} yields
  \begin{equation}\label{eq:probability-case-1}
    \frac{A_{nh}5\pi\sqrt{n}}{2\cdot 5^n\exp\bigl(-\frac{5h^2}{4n}\bigr)}
    = \frac{25h\sqrt{5\pi}\exp(\alpha^2)}{4n^2}G(\alpha)\Bigl(1+ O\Bigl(\frac
    hn +\frac{h^4}{n^3}+\frac{\log n}{n^{1/2-\varepsilon}}\Bigr)\Bigr).
  \end{equation}

  We now turn to the case $\alpha^2<\pi$. We now use the second expression for $G(\alpha)$ in
  \eqref{eq:poisson}. Again, all summands are positive and we bound $G(\alpha)$
  by the first summand from below. This yields
  \begin{equation*}
    G(\alpha)=\Omega\Bigl(\frac{1}{\alpha^{7}} \exp\Bigl(-\frac{\pi^2}{\alpha^2}\Bigr)\Bigr)
  \end{equation*}
  and, by \eqref{eq:I_h_n_G} and \eqref{eq:alpha-square-lower-bound},
  \begin{equation*}
    I_{hn}=\Omega\Bigl(\frac{n^{3/2}}{h^{6}}\exp(-\varepsilon\log n)\Bigr)=\Omega\Bigl(\frac{n^{3/2-\varepsilon}}{h^6}\Bigr).
  \end{equation*}
  For an upper bound of $E_{hn}$, we use the estimate $(1-r^h)^{-1}=O(n/h^2)$,
  cf.\ \eqref{eq:limit-theorem-denominator-bounds}. We get
  \begin{equation*}
    E_{hn}=O\Bigl(\frac{1}{n^{3/2}}\cdot \frac{n^2}{h^4}\Bigr)=O\Bigl(\frac{n^{1/2}}{h^{4}}\Bigr)=
    O\Bigl(\frac{n^{3/2-\varepsilon}}{h^6} \frac{h^{2}}{n} n^{\varepsilon}\Bigr)=O(n^{\varepsilon}I_{hn}).
  \end{equation*}
  Thus \eqref{eq:1} yields
  \begin{equation}\label{eq:probability-case-2}
    \frac{A_{nh}5\pi\sqrt{n}}{2\cdot 5^n\exp\bigl(-\frac{5h^2}{4n}\bigr)}
    = \frac{25h\sqrt{5\pi}\exp(\alpha^2)}{4n^2}G(\alpha)\Bigl(1+
    O\Bigl(\frac{\log n}{n^{1/2-2\varepsilon}}\Bigr)\Bigr).
  \end{equation}
  Combining \eqref{eq:probability-case-1} and \eqref{eq:probability-case-2}
  with \eqref{eq:number-trees} yields the result.
\end{proof}
\subsection{Number of Vertices}
In this section, we consider the number of vertices of a random rooted plane
multi-edge tree of size $n$.

We first give an explicit formula.

\begin{theorem}\label{theorem:fixed-vertices}
  The number of rooted plane multi-edge trees of size $n$ with $k$ vertices is
  \begin{equation}\label{eq:exact-number}
    \frac1k\binom{2k-2}{k-1}\binom{n-1}{k-2}.
  \end{equation}
\end{theorem}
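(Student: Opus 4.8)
The plan is to prove the formula by a direct combinatorial decomposition that separates the underlying tree shape from the edge multiplicities. First I would observe that every multi-edge tree determines a unique ordinary plane tree, its \emph{skeleton}, obtained by replacing each bundle of parallel edges between a vertex and one of its children by a single edge; conversely, a multi-edge tree is completely recovered from its skeleton together with the multiplicity (a positive integer) attached to each edge of the skeleton. A multi-edge tree with $k$ vertices has a skeleton with $k$ vertices and hence exactly $k-1$ edges, and its size $n$ equals the sum of the multiplicities over these $k-1$ edges.

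This sets up a bijection between multi-edge trees of size $n$ with $k$ vertices and pairs $(\calS,\mu)$, where $\calS$ is a plane tree on $k$ vertices and $\mu$ assigns to each of the $k-1$ edges of $\calS$ a positive integer, subject to the single constraint that these integers sum to $n$. Since the skeleton and the multiplicity vector may be chosen independently, the count factors as a product of two independent choices.

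I would then evaluate the two factors. The number of plane trees on $k$ vertices is the Catalan number $\frac1k\binom{2k-2}{k-1}$ (see \cite[p.~31]{Flajolet-Sedgewick:ta:analy}). For a fixed skeleton with $k-1$ edges, the admissible multiplicity vectors are exactly the compositions of $n$ into $k-1$ positive parts, of which there are $\binom{n-1}{k-2}$. Multiplying the two factors yields \eqref{eq:exact-number}.

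The argument is essentially routine; the only point that will require care is verifying that the skeleton-plus-multiplicities map is genuinely a bijection, that is, that the plane (left-to-right) ordering of the children is faithfully preserved under collapsing and re-expanding parallel bundles, so that no tree is counted twice and none is missed. A useful consistency check is to confirm that summing \eqref{eq:exact-number} over $k$ reproduces the initial terms $1,1,3,10,36,\dots$ of $A_n$. Alternatively, one could bypass the bijection and instead extract $[z^nw^k]$ from the bivariate generating function $T(z,w)$ (with $w$ marking vertices) satisfying $zT^2-(1-z)T+w(1-z)=0$ by Lagrange inversion, but the decomposition above is considerably more transparent.
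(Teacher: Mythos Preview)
Your argument is correct and coincides with the paper's own combinatorial proof: factor a multi-edge tree into its underlying plane tree on $k$ vertices (counted by the Catalan number $\frac1k\binom{2k-2}{k-1}$) together with a composition of $n$ into $k-1$ positive parts (counted by $\binom{n-1}{k-2}$). The paper additionally supplies the generating-function derivation via Lagrange inversion that you mention as an alternative, so both routes you sketch are exactly the two proofs given there.
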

\begin{proof}
 We first provide a proof based on the generating function, which will also be needed later.
Let $T(y,z)$ be the bivariate generating function for rooted plane multi-edge trees, where
$y$ marks the number of vertices and $z$ the number of edges. Rooted plane multi-edge trees
  $\calT$ can be represented symbolically as
  \begin{equation}\label{eq:symbolic-equation-marked}
    \calT = \{y\} \times (\calE^+\calT)^*.
  \end{equation}
 This symbolic equation translates to
  \begin{equation}\label{eq:vertices-bivariate-generating-function-equation-1}
    T(y, z)=\frac{y}{1-\frac{z}{1-z}T(y, z)}=\frac{y(1-z)}{1-z-zT(y, z)}.
  \end{equation}

  For a fixed $z$, we compute the coefficient $[y^k]T(y, z)$ using the Lagrange inversion formula.
  By~\eqref{eq:vertices-bivariate-generating-function-equation-1}, we have
  \begin{equation*}
    y= T(y, z) \frac{1-z-zT(y, z)}{1-z}.
  \end{equation*}
Now the Lagrange inversion formula gives us
  \begin{align*}
    [y^k]T(y, z)&=\frac1k [T^{k-1}]\Bigl(\frac{1-z}{1-z-zT}\Bigr)^{k}\\
    &=\frac1k[T^{k-1}]\Bigl(\frac1{1-\frac{z}{1-z}T}\Bigr)^k
    \\&=\frac1k \binom{-k}{k-1}(-1)^{k-1} \Bigl(\frac{z}{1-z}\Bigr)^{k-1}\\
    &=\frac{1}{k}\binom{2k-2}{k-1}\Bigl(\frac{z}{1-z}\Bigr)^{k-1}.
  \end{align*}
  Finally, we extract the coefficient of $z^n$:
  \begin{align*}
    [z^n][y^k]T(y, z)&=\frac1k\binom{2k-2}{k-1}[z^{n-k+1}](1-z)^{1-k}\\
    &=
    \frac1k\binom{2k-2}{k-1}\binom{1-k}{n-k+1}(-1)^{n-k+1}\\
    &=\frac1k\binom{2k-2}{k-1}\binom{n-1}{n-k+1}.
  \end{align*}
\end{proof}
\begin{proof}[Combinatorial proof of
  Theorem~\ref{theorem:fixed-vertices}]
  It is well known that the number of plane rooted trees (without multiple
  edges) with $k$ vertices is given by the Catalan number
  $C_{k-1}=\frac1k\binom{2k-2}{k-1}$. Each such tree can be transformed into a
  multi-edge tree of size $n$ by distributing the $n$ edges among the $k-1$
  edges of the non-multi-edge tree. This corresponds to a composition of $n$
  into $k-1$ parts. There are $\binom{n-1}{k-2}$ such compositions. Thus there
  are $\frac1k\binom{2k-2}{k-1}\binom{n-1}{k-2}$ plane rooted multi-edge tree of
  size $n$ with $k$ vertices.
\end{proof}

The distribution of the number of vertices can now be derived from the
explicit formula in Theorem~\ref{theorem:fixed-vertices} using Stirling's
formula. In order to determine the asymptotic behaviour of the moments, we use an approach via Hwang's quasi power theorem which turns out to
be more convenient.
\begin{theorem}\label{theorem:number-vertices}
  Let $V_n$ be the number of vertices of a random rooted plane multi-edge tree
  of size $n$. Then
  \begin{align*}
    \E(V_n)&=\frac{4}{5}n+\frac{9}{10}+O\left(\frac{1}{n}\right),\\
    \V(V_n)&=\frac{4}{25}n + \frac{2}{25}+O\left(\frac{1}{n}\right),\\
\intertext{and}
    \P\left(\frac{V_n-\frac45n}{\frac25\sqrt{n}}\le
      v\right)&=\frac1{\sqrt{2\pi}}\int_{-\infty}^v e^{-t^2/2}\,dt + O\left(\frac{1}{\sqrt{n}}\right)
  \end{align*}
  holds uniformly for $v\in\R$. Furthermore, the local limit theorem 
  \begin{equation*}
    \P(V_n=k)\sim \frac{5}{2\sqrt{2 n\pi}}\exp\Bigl(-\frac{1}{2}\Bigl(\frac{k-\frac{4}{5}n}{\frac{2}{5}\sqrt{n}}\Bigr)^2\Bigr)
  \end{equation*}
  holds for $k = \frac{4n}{5} + o(n^{2/3})$.
\end{theorem}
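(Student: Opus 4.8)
The plan is to handle the two halves of the statement by different methods, as the hint preceding the theorem suggests: the mean, variance and central limit theorem via the bivariate generating function and Hwang's quasi-power theorem, and the local limit theorem directly from the explicit count~\eqref{eq:exact-number}. For the first half I would start from the functional equation~\eqref{eq:vertices-bivariate-generating-function-equation-1}, which is the quadratic $zT^2-(1-z)T+y(1-z)=0$. Solving it and selecting the branch with $T(y,0)=y$ gives
\[
  T(y,z)=\frac{(1-z)-\sqrt{(1-z)(1-z-4zy)}}{2z}.
\]
For $y$ in a fixed complex neighbourhood of $1$, the dominant singularity in $z$ is the zero $\rho(y)=1/(1+4y)$ of the second factor, and since $1-z-4zy=1-z/\rho(y)$ we can write $T(y,z)=g(z)-h(z)\sqrt{1-z/\rho(y)}$ with $g(z)=(1-z)/(2z)$ and $h(z)=\sqrt{1-z}/(2z)$ analytic and nonzero at $\rho(y)$. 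As $\rho(y)$ depends analytically on $y$ and the square-root type is preserved, a uniform version of singularity analysis~\cite{Flajolet-Odlyzko:1990:singul} yields
\[
  \E(y^{V_n})=\frac{[z^n]T(y,z)}{[z^n]T(1,z)}=C(y)\,M(y)^n\Bigl(1+O\Bigl(\frac1n\Bigr)\Bigr)
\]
uniformly for $y$ near $1$, where $M(y)=\rho(1)/\rho(y)=(1+4y)/5$ and $C(y)=h(\rho(y))/h(\rho(1))=\sqrt{y(1+4y)/5}$.

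This is precisely the quasi-power form required by Hwang's theorem, with $M(1)=C(1)=1$, so it directly produces the Gaussian limit law with error $O(n^{-1/2})$. To obtain the moments together with their constant terms I would differentiate $\log\E(e^{sV_n})=n\log M(e^s)+\log C(e^s)+O(1/n)$ at $s=0$; since the error is uniform on a complex neighbourhood, Cauchy's estimate keeps it $O(1/n)$ after differentiation. Here $\tfrac{d}{ds}\log M(e^s)|_{0}=\tfrac45$ and $\tfrac{d^2}{ds^2}\log M(e^s)|_{0}=\tfrac4{25}$ give the leading terms of $\E(V_n)$ and $\V(V_n)$, while $\tfrac{d}{ds}\log C(e^s)|_{0}=\tfrac9{10}$ and $\tfrac{d^2}{ds^2}\log C(e^s)|_{0}=\tfrac2{25}$ supply the additive constants, matching the claimed expansions.

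For the local limit theorem I would instead use the exact count, writing $\P(V_n=k)=\frac1k\binom{2k-2}{k-1}\binom{n-1}{k-2}/A_n$ with $A_n$ from~\eqref{eq:number-trees}. Applying Stirling's formula to the Catalan factor $\frac1k\binom{2k-2}{k-1}$ and to $\binom{n-1}{k-2}$ and setting $k=\tfrac45 n+t$, I would expand $\log\P(V_n=k)$ in $t$ about the mode $k\approx\tfrac45 n$: the linear term vanishes there, the quadratic term reproduces $-\tfrac12 t^2/\sigma^2$ with $\sigma^2=\tfrac4{25}n$, and the cubic and higher terms are $O(t^3/n^2)$, hence negligible exactly when $t=o(n^{2/3})$. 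Together with the normalising factor $1/(\sqrt{2\pi}\,\sigma)=5/(2\sqrt{2\pi n})$ this gives the asserted Gaussian, and one may check that the leading term of $[z^n]T(1,z)$ predicted in the first half agrees with~\eqref{eq:number-trees}.

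I expect the crux to lie in the first half: establishing the quasi-power expansion with a genuinely \emph{uniform} $O(1/n)$ error over a complex neighbourhood of $y=1$, so that the algebraic singularity at $\rho(y)$ stays isolated and dominant and so that differentiating in $s$ to extract the exact constants $\tfrac9{10}$ and $\tfrac2{25}$ is justified. By comparison the local limit part is routine, its only delicate point being the uniform control of the Stirling error terms throughout the range $t=o(n^{2/3})$.
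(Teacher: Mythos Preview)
Your proposal is correct and follows essentially the same route as the paper: you derive the same explicit form of $T(y,z)$, identify the dominant singularity $\rho(y)=1/(1+4y)$, obtain the quasi-power expansion $p_n(y)=\sqrt{y}\bigl(\tfrac{1+4y}{5}\bigr)^{n+1/2}(1+O(1/n))$ via singularity analysis uniformly in $y$, invoke Hwang's theorem for the CLT and moments, and handle the local limit theorem by applying Stirling's formula to the exact count~\eqref{eq:exact-number}. The only cosmetic difference is that you spell out the differentiations giving the constants $\tfrac{9}{10}$ and $\tfrac{2}{25}$ and flag the uniformity of the $O(1/n)$ term explicitly, whereas the paper simply appeals to the version of Hwang's theorem in \cite[Theorem~IX.8]{Flajolet-Sedgewick:ta:analy}, which packages both of these.
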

\begin{proof}
Let $T(y,z)$ be the bivariate generating function as in the first proof of Theorem~\ref{theorem:fixed-vertices}. The functional equation~\eqref{eq:vertices-bivariate-generating-function-equation-1} is equivalent to
  \begin{equation}\label{eq:vertices-bivariate-generating-function-equation}
    zT(y, z)^2-(1-z)T(y, z)+y(1-z)=0.
  \end{equation}
  Solving this quadratic equation yields
  \begin{equation}\label{eq:bivariate-generating-function}
    T(y, z)=\frac{(1-z)-\sqrt{1-z}\sqrt{1-(4y+1)z}}{2z};
  \end{equation}
  note that the negative sign has to be chosen to obtain regularity at $z=0$.

  The probability generating function of $V_n$ is then
  \begin{equation*}
    p_n(y)=\frac{[z^n]T(y, z)}{[z^n]T(1, z)}.
  \end{equation*}

  For $y$ in a neighbourhood of $1$, the dominant singularity is at
  $z=1/(1+4y)$. As
  \begin{align*}
    T(y,
    z)&=\frac{1-\frac1{1+4y}}{\frac{2}{1+4y}}-\frac{\sqrt{1-\frac{1}{1+4y}}}{\frac{2}{1+4y}}\sqrt{1-(4y+1)z}
    + O(1- (4y+1)z)\\
    &=
    2y - \sqrt{y(1+4y)}\sqrt{1-(4y+1)z} + O(1- (4y+1)z)
  \end{align*}
  for $z\to 1/(1+4y)$ except for one ray, singularity analysis
  \cite{Flajolet-Odlyzko:1990:singul} yields
  \begin{align*}
    [z^n]T(y, z) &= -\frac{\sqrt{y(1+4y)}}{\Gamma(-1/2)}(4y+1)^{n}n^{-3/2} +
    O((4y+1)^n n^{-5/2})  \\
    &=\frac{\sqrt{y(1+4y)}}{2\sqrt{\pi n^3}}(4y+1)^n\Bigl(1+O\Bigl(\frac1n\Bigr)\Bigr).
  \end{align*}
  For $y=1$, this coincides with \eqref{eq:number-trees}.
  
  Thus
  \begin{equation*}
    p_n(y)=\frac{[z^n]T(y, z)}{[z^n]T(1, z)}=\sqrt{y}\left(\frac{4y+1}{5}\right)^{n+1/2}\left(1+O\left(\frac1n\right)\right).
  \end{equation*}
  The asymptotic formul\ae{} for mean and variance in Theorem~\ref{theorem:number-vertices} as well as the central limit theorem are an immediate
  consequence of Hwang's quasi power theorem~\cite{Hwang:1998} in the version
  of \cite[Theorem~IX.8]{Flajolet-Sedgewick:ta:analy}.

The local limit theorem follows immediately from the explicit formula in~Theorem \ref{theorem:fixed-vertices}: applying Stirling's formula to~\eqref{eq:exact-number}, we find that the total number of multi-edge trees with $n$ edges and $k = \frac{4n}{5} + R$ vertices is equal to
$$\frac{5^{n+3/2}}{4\pi \sqrt{2} n^2} \exp \Big( - \frac{25R^2}{8n} + O \Big( \frac{1}{n} + \frac{R}{n} + \frac{R^3}{n^2} \Big) \Big).$$
Combining this with the asymptotic formula~\eqref{eq:number-trees} for the
total number $A_n$ of multi-edge trees with $n$ edges, we obtain the desired statement for $R = o(n^{2/3})$.
\end{proof}

\bibliographystyle{amsplainurl}
\bibliography{bib/cheub}

\end{document}